\newtheorem{theorem}{Theorem}[section]
\newtheorem{lemma}[theorem]{Lemma}
\newtheorem{corollary}[theorem]{Corollary}
\newtheorem{proposition}[theorem]{Proposition}
\theoremstyle{definition}
\newtheorem{definition}[theorem]{Definition}
\newtheorem{remark}[theorem]{Remark}
\numberwithin{equation}{section}
\def\RR{\mathbb{R}} 
\def\CC{\mathbb{C}} 
\def\DD{\mathbf{D}} 
\def\NN{\mathbb{N}} 
\def\im{\sqrt{-1}} 
\def\ddbar{\partial\bar\partial} 
\def\vol{\mathrm{Vol}} 
\def\ve{\varepsilon}
\def\vp{\varphi}
\def\beq{\begin{equation*}}
\def\eeq{\end{equation*}}
\newcommand{\paren}[1]{\left(#1\right)}
\newcommand{\bparen}[1]{\left[#1\right]}
\newcommand{\pd}[2]{\frac{\partial#1}{\partial#2}}
\newcommand{\abs}[1]{\left\vert#1\right\vert}
\newcommand{\norm}[1]{\left\|#1\right\|}
\newcommand{\ov}[1]{\overline{#1}}
\newcommand{\inner}[1]{\left\langle{#1}\right\rangle}
\newcommand{\ind}[3]{_{#1\phantom{#2}#3}^{\phantom{#1}#2}}
\newcommand{\ip}[1]{\mathrm{Im}\;s}
\begin{document}


\title[Direct images of Fiberwise Ricci-flat metrics]{Positivity of direct images of fiberwise Ricci-flat metrics on Calabi-Yau fibrations}

\author{Matthias Braun}
\address{Fachbereich Mathematik und Informatik, Philipps-Universit\"at Marburg, Lahnberge, Hans-Meerwein-Stra{\ss}e, D-35032 Marburg, Germany}
\email{braunm@mathematik.uni-marburg.de}

\author{Young-Jun Choi}

\address{Department of Mathematics, Pusan National University, 2, Busandaehak-ro 63beon-gil, Geumjeong-gu, Busan, 46241, Korea}

\email{youngjun.choi@pusan.ac.kr}

\author{Georg Schumacher}

\address{Fachbereich Mathematik und Informatik, Philipps-Universit\"at Marburg, Lahnberge, Hans-Meerwein-Stra\ss e, D-35032 Marburg, Germany}
\email{schumac@mathematik.uni-marburg.de}

\subjclass[2010]{32Q25, 32Q20, 32G05, 32W20}

\keywords{Calabi-Yau manifold, Ricci-flat metric, K\"ahler-Einstein metric, a family of Calabi-Yau manifolds, variation}

\begin{abstract}
Let $X$ be a K\"ahler manifold which is fibered over a complex manifold $Y$ such that every fiber is a Calabi-Yau manifold. 
Let $\omega$ be a fixed K\"ahler form on $X$. 
By Yau's theorem, there exists a unique Ricci-flat K\"ahler form $\rho\vert_{X_y}$ for each fiber, which is cohomologous to $\omega\vert_{X_y}$. 
This family of Ricci-flat K\"ahler forms $\rho\vert_{X_y}$ induces a smooth $(1,1)$-form $\rho$ on $X$ with a normalization condition. 
In this paper, we prove that the direct image of $\rho^{n+1}$ is positive on the base $Y$.
We also discuss several byproducts, among them the local triviality of families of Calabi-Yau manifolds.
\end{abstract}

\maketitle

\section{Introduction}

Let $p:X\rightarrow Y$ be a proper surjective holomorphic mapping between complex manifolds $X$ and $Y$ whose differential has maximal rank everywhere such that every fiber $X_y:=p^{-1}(y)$ is a compact K\"ahler manifold. This is called a \emph{smooth family of compact K\"ahler manifolds} or a \emph{compact K\"ahler fibration}. If every fiber $X_y$ is a Calabi-Yau manifold, i.e., a compact K\"ahler manifold whose canonical line bundle $K_{X_y}$ is trivial, then the family is called a \emph{smooth family of Calabi-Yau manifolds} or a \emph{Calabi-Yau fibration}. 

If $(X,\omega)$ is a K\"ahler mainfiold, then a celebrated theorem due to Calabi and Yau implies that on each fiber $X_y$, there exists a unique Ricci-flat metric $\omega_{KE,y}$ in the cohomology class $[\omega\vert_{X_y}]$. 
This family of Ricci-flat metrics induces a fiberwise Ricci-flat metric on the total space $X$.
\medskip

The main theorem of this paper is the following:

\begin{theorem} \label{T:main_theorem}
Let $p:X\rightarrow Y$ be a smooth family of Calabi-Yau manifolds.
Suppose that $X$ is a K\"ahler manifold equipped with a K\"ahler form $\omega$.
Let $\omega_{KE,y}$ be the unique Ricci-flat form in the cohomology class $[\omega\vert_{X_y}]$. Then there exists a unique smooth function $\vp\in C^\infty(X)$ which satisfies the following properties:
\begin{itemize}
\item[(\romannumeral1)] $\int_{X_y}\vp(\omega_{KE,y})^n=0$ for every $y\in Y$,
\item[(\romannumeral2)] $\omega+dd^c\vp\vert_{X_y}$ is a Ricci-flat K\"ahler form on $X_y$ for every $y\in Y$ and
\item[(\romannumeral3)] $p_*\paren{\omega+dd^c\vp}^{n+1}$ is a positive $(1,1)$-form on $Y$.
\end{itemize}
\end{theorem}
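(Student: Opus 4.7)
The plan is to construct $\vp$ fiberwise via Yau's theorem and then to analyze the pushforward in (iii) through horizontal lifts with respect to the fiberwise Ricci-flat metric. For each $y \in Y$, Yau's solution of the Calabi conjecture produces a unique smooth function $\vp_y \in C^\infty(X_y)$ satisfying $\omega\vert_{X_y} + dd^c\vp_y = \omega_{KE,y}$, and the normalization in (i) pins down a canonical representative. The smooth dependence of solutions of the fiberwise complex Monge--Amp\`ere equation on the parameter $y$, which follows from Yau's \emph{a priori} estimates combined with the implicit function theorem in Banach spaces, allows me to assemble these into a single smooth function $\vp \in C^\infty(X)$. Properties (i) and (ii) hold by construction, and uniqueness of $\vp$ follows from the uniqueness in Yau's theorem together with the normalization.

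For (iii), set $\rho := \omega + dd^c\vp$, so that $\rho\vert_{X_y} = \omega_{KE,y}$ on every fiber. Given a $(1,0)$-tangent vector $V$ at $y \in Y$, let $V^h$ denote its \emph{$\rho$-horizontal lift}, the unique smooth $(1,0)$-vector field defined near $X_y$ satisfying $p_* V^h = V$ and $\rho(V^h, \bar W) = 0$ for every $\bar W$ tangent to the fibers. Expanding $\rho^{n+1}$ using the splitting of $TX$ into horizontal and vertical parts yields the identity
\begin{equation*}
\frac{1}{(n+1)!}\, p_*\rho^{n+1}(V, \bar V) \;=\; \int_{X_y} c(\rho)(V, \bar V)\, \frac{\omega_{KE,y}^n}{n!},
\end{equation*}
where $c(\rho)(V, \bar V) := \rho(V^h, \bar V^h)$ is the \emph{geodesic curvature} of $\vp$ along $V$. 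Thus (iii) reduces to the integral inequality $\int_{X_y} c(\rho)(V, \bar V)\, \omega_{KE,y}^n \ge 0$ for every $V$.

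The core of the argument is to derive a fiberwise elliptic equation for $c(\rho)(V, \bar V)$ by differentiating the Ricci-flat Monge--Amp\`ere equation in horizontal directions. Working locally on $Y$, choose a nowhere-vanishing holomorphic section $\sigma$ of $K_{X/Y}$; the Ricci-flat condition then reads $\omega_{KE,y}^n = e^{a(y)}\, i^{n^2}\sigma\wedge\bar\sigma\vert_{X_y}$ for a smooth function $a(y)$ on $Y$ (depending on $\sigma$, but with $\partial\bar\partial a$ canonically defined). Applying $\partial_V\bar\partial_V$ to the Monge--Amp\`ere equation and manipulating the result gives, on each fiber, an identity of the schematic form
\begin{equation*}
\Delta_{\omega_{KE,y}}\, c(\rho)(V, \bar V) \;=\; \abs{A_V}^2_{\omega_{KE,y}} \;-\; \bigl(\text{fiberwise average involving } \partial_V\bar\partial_V a\bigr),
\end{equation*}
where $A_V := \bar\partial V^h$ is the Kodaira--Spencer representative of the deformation at $V$. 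Integration against $\omega_{KE,y}^n$ kills the Laplacian term on the left, while the right-hand side becomes, up to the average term, the Weil--Petersson pairing $\omega_{WP}(V,\bar V)$, which is manifestly non-negative. The hardest part of the argument will be executing this integration by parts cleanly and verifying that the normalization (i) exactly absorbs the $a(y)$ contribution; once this is in place, inverting the fiberwise Laplacian via the Green kernel of $\omega_{KE,y}$ produces both the positivity in (iii) and the pointwise lower bound on $\rho$ announced in the abstract.
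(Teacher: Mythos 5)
Your construction of $\vp$ and the reduction of (iii) to the fiberwise inequality $\int_{X_y} c(\rho)\,\omega_{KE,y}^n \ge 0$ agree with the paper, but the final step has a genuine gap. The fiberwise equation you derive is, in the paper's notation,
\begin{equation*}
-\Delta_{\rho} c(\rho) \;=\; \abs{\bar\partial v_\rho}_\rho^2 \;-\; \Theta_{s\bar s}(E),
\end{equation*}
and it carries \emph{no zeroth-order term in $c(\rho)$}. Integrating it over the fiber annihilates the left-hand side and yields only the identity $\norm{\bar\partial v_\rho}^2_{L^2(X_y)}=\Theta_{s\bar s}(E)$ (the statement that the Weil--Petersson form equals the curvature of $p_*K_{X/Y}$); the quantity $\int_{X_y}c(\rho)\rho^n$ you need to bound has dropped out entirely. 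Likewise, inverting the fiberwise Laplacian by the Green kernel gives
\begin{equation*}
c(\rho)(z)\;=\;\int_{X_y}c(\rho)\rho^n\;+\;\int_{X_y}G_y(z,w)\paren{\abs{\bar\partial v_\rho}_\rho^2-\Theta_{s\bar s}(E)}\rho^n,
\end{equation*}
which determines $c(\rho)$ only up to its fiberwise average --- which is exactly the unknown. This is precisely where Schumacher's scheme fails for Calabi--Yau fibers: in the canonically polarized case the analogous equation contains an extra $+\,c(\tilde\rho)$ on the left, which survives integration and the maximum principle, and your proposal supplies no substitute for it. The normalization (i) does not rescue this; it fixes the additive constant in $\vp$, not the fiberwise mean of $c(\rho)$.

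The ingredient the paper uses to close this gap is an approximation procedure. One solves $(\omega_y+dd^c\vp_{y,\ve})^n=e^{\ve\vp_{y,\ve}+\eta_y}(\omega_y)^n$ for $\ve>0$, for which the geodesic curvature satisfies
\begin{equation*}
-\Delta_{\rho_\ve}c(\rho_\ve)+\ve\,c(\rho_\ve)
\;=\;
\ve\,\omega(v_{\rho_\ve},\overline{v_{\rho_\ve}})
+\abs{\bar\partial v_{\rho_\ve}}^2_{\rho_\ve}
-\Theta_{s\bar s}(E).
\end{equation*}
Integrating over the fiber, the contribution $\norm{\bar\partial v_{\rho_\ve}}^2_{L^2}-\Theta_{s\bar s}(E)$ is nonnegative because, by Griffiths' formula, $\Theta_{s\bar s}(E)$ is computed from the \emph{harmonic} representative of the Kodaira--Spencer class, which minimizes the $L^2$ norm among all representatives; dividing by $\ve$ gives $\int_{X_y}c(\rho_\ve)\rho_\ve^n\ge\int_{X_y}\omega(v_{\rho_\ve},\overline{v_{\rho_\ve}})\rho_\ve^n>0$. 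One must then establish uniform-in-$\ve$ $C^{k,\alpha}$ estimates for $\vp_\ve$, $V\vp_\ve$ and $\bar VV\vp_\ve$ (Ko{\l}odziej's uniform estimate, P\u aun-type Laplacian estimates, Evans--Krylov, and a Moser iteration for the linearized equations) in order to pass to the limit $\ve\to0$ and conclude $\int_{X_y}c(\rho)\rho^n\ge\int_{X_y}\omega(v_\rho,\overline{v_\rho})\rho^n>0$. Without this device, or an equivalent one, your argument proves the Weil--Petersson identity and (via the Green kernel) the lower bound of the paper's second theorem \emph{modulo} the positivity of the fiber integral, but not part (iii) itself.
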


Here $d^c$ means the real operator defined by $d^c=\frac{\im}{2}\paren{\partial-\bar\partial}$.
Then we have $dd^c=\im\ddbar$.
We call the $(1,1)$-form $\rho:=\omega+dd^c\vp$ which satisfies the property $(\romannumeral2)$ a \emph{fiberwise Ricci-flat metric} or a \emph{semi Ricci-flat K\"ahler form} on a Calabi-Yau fibration $p:X\rightarrow Y$.
Note that a real $(1,1)$-form on $X$ satisfying $(\romannumeral2)$ is not uniquely determined.
With the normalization condition $(\romannumeral1)$, a fiberwise Ricci-flat metric is uniquely determined. From now on, the fiberwise Ricci-flat metric on a Calabi-Yau fibration means the real $(1,1)$-form which satisfies $(\romannumeral1)$ and $(\romannumeral2)$.
\medskip

For a family of canonically polarized compact K\"ahler manifolds, we have a fiberwise K\"ahler-Einstein metric by the similar way.
The positivity of the fiberwise K\"ahler-Einstein metric on a family of compact K\"ahler manifolds was first studied by Schumacher.
In his paper \cite{Schumacher}, he have proved that the fiberwise K\"ahler-Einstein metric on a family of canonically polarized compact K\"ahler manifolds is semi-positive.
Moreover he have also proved that it is strictly positive if the family is effectively parametrized.
This is equivalent to the semi-positivity or positivity of the relative canonical line bundle of the family, respectively.
P\v aun have shown that if the relative adjoint line bundle is positive on each fiber, then it is semi-positive on the total space by generalizing the method of Schumacher (\cite{Paun2}).
Guenancia also have proved the semi-positivity of the fiberwise conic singular K\"ahler-Einstein metric  (\cite{Guenancia}).
In case of a family of complete K\"ahler manifolds, Choi have proved that the fiberwise K\"ahler-Einstein metric on a family of bounded pseudoconvex domains is semi-positive or positive if the total space is pseudoconvex or strongly pseudoconvex, respectively (\cite{Choi1, Choi2}).
\medskip

The proof of Schumacher's theorem starts with the following identity from \cite{Semmes}: For a real $(1,1)$-form $\tau$ on $X$,
\begin{equation}\label{E:Semmes0}
\tau^{n+1}
=
c(\tau)\tau^n\wedge\im ds\wedge d\bar s
\end{equation}
where $\tau^n$ is the $n$-fold exterior power divided by $n!$. Here $c(\tau)$ is called a \emph{geodesic curvature} of $\tau$. (For the detail, see Section \ref{SS:horizontal_lift}.) Now suppose that $\tau$ is positive-definite on each fiber $X_y$. Then \eqref{E:Semmes0} says that $\tau$ is semi-positive or positive if and only if $c(\tau)\ge0$ or $c(\tau)>0$, respectively. Schumacher have proved that the geodesic curvature of the fiberwise K\"ahler-Einstein metric on a family of canonically polarized compact K\"ahler manifolds satisfies a certain second order linear elliptic partial differential equation. This PDE gives a lower bound of the geodesic curvature by the maximum principle or an lower bound estimate on the heat kernel.

However, in case of a Calabi-Yau fibration, the PDE which the geodesic curvature of fiberwise Ricci-flat metric satisfies is of a different type from the previous one.
(See Section \ref{S:fiberwiseRFf}.)
In particular, it does not give a lower bound of the geodesic curvature. This is why Schumacher's method does not give positivity or semi-positivity of the fiberwise Ricc-flat metric.
But the approximation procedure of complex Monge-Ampere equations, it is possible to obtain a lower bound of the direct image of the fiberwise Ricci-flat metric (see Section \ref{S:proof}.)
This is the main contribution of this paper.

This difference of the PDEs, which the fiberwise K\"ahler-Einstein metric on a family of canonically polarized manifolds and the fiberwise Ricc-flat metric on a family of Calabi-Yau manifold satisfy, arises from the difference of complex Monge-Amp\`ere equations which give the K\"ahler-Einstein metrics. More precisely, the complex Monge-Amp\`ere equation of type:
\begin{equation}\label{E:1}
\paren{\omega+dd^c\vp}^n
=
e^{\lambda\vp+f}\omega^n,
\end{equation}
for some constant $\lambda>0$ and some suitable smooth function $f$, gives the K\"aher-Einstein metric on a canonically polarized compact K\"aher manifold. On the other hand, the complex Monge-Amp\`ere equation of type:
\begin{equation}\label{E:0}
\paren{\omega+dd^c\vp}^n
=
e^{\tilde{f}}\omega^n
\end{equation}
for some suitable smooth function $\tilde f$, gives the K\"ahler-Einstein (in this case Ricci-flat) metric on a Calabi-Yau manifold. It is remarkable to note that if $f$ and $\tilde f$ coincide,  then \eqref{E:1} converges to \eqref{E:0} as $\lambda\rightarrow0$.
Then by the a priori estimate for complex Monge-Amp\`ere equation, it is well known that the solutions $\vp_\lambda$ of \eqref{E:1} converges to the solution  of \eqref{E:0} (see Section \ref{S:approximation}).
This is the key obervation of the proof of approximation procedures which we already mentioned.

\section{Preliminaries}

Let $p:X^{n+d}\rightarrow Y^d$ be a smooth family of K\"ahler manifolds. Taking a local coordinate $(s^1,\dots,s^d)$ of $Y$ and a local coordinate $(z^1,\dots,z^n)$ of a fiber of $p$, $(z^1,\dots,z^n,s^1,\dots,s^d)$ forms a local coordinate of $X$ such that under this coordinate, the holomorphic mapping $p$ is locally given by
\begin{equation*}
p(z^1,\dots,z^n,s^1,\dots,s^d)
=
(s^1,\dots,s^d).
\end{equation*}
We call this an \emph{admissible coordinate of $p$}.

Throughout this paper we use  small Greek letters, $\alpha,\beta,\dots=1,\dots,n$ for indices on $z=(z^1,\dots,z^n)$ and small roman letters, $i,j,\dots=1,\dots,d$ for indices on $s=(s^1,\dots,s^d)$ unless otherwise specified. For a properly differentiable function $f$ on $X$, we denote by
\begin{equation} \label{E:convention}
f_\alpha
    =\pd{f}{z^\alpha},\;\;
f_{\bar\beta}
    =\pd{f}{z^{\bar\beta}},\;\;
   \;\;\text{and}\;\;
f_{i}
    =\pd{f}{s^i},\;\;
f_{\bar{j}}
    =\pd{f}{s^{\bar{j}}},
\end{equation}
where $z^{\bar\beta}$ and $s^{\bar{j}}$ mean $\overline{z^\beta}$ and $\overline{s^j}$, respectively. In case $d=1$, we denote by
\begin{equation*}
f_{s}
	=\pd{f}{s}\;\;
\text{and}\;\;
f_{\bar{s}}
	=\pd{f}{\bar{s}}.
\end{equation*}
If there is no confusion, we always use the Einstein convention. For simplicity we denote by $v_i:=\partial/\partial{s^i}$. If $d=1$, then we denote by $v:=\partial/\partial{s}$.

\subsection{Horizontal lifts and geodesic curvatures}\label{SS:horizontal_lift}

For a complex manifold $M$, we denote by $T'M$ the complex tangent bundle of type $(1,0)$.
\begin{definition} \label{D:lift&curvature}
Let $V\in T'Y$ and $\tau$ be a real $(1,1)$-form on $X$. Suppose that $\tau$ is positive definite on each fiber $X_y$.
\begin{itemize}
\item[1.] A vector field $V_\tau$ of type $(1,0)$ is  called a \emph{horizontal lift} of $V$ if $V_\tau$ satisfies the following:
\begin{itemize}
\item [(\romannumeral1)]$\inner{V_\tau,W}_\tau=0$ for all $W\in{T'X_y}$,
\item [(\romannumeral2)]$d\pi(V_\tau)=V$.
\end{itemize}
\item[2.] The \emph{geodesic curvature} $c(\tau)(V)$ of $\tau$ along $V$ is defined by the norm of $V_\tau$ with respect to the sesquilinear form $\inner{\cdot,\cdot}_\tau$ induced by $\tau$, namely,
\begin{equation*}
c(\tau)(V)=\inner{V_\tau,V_\tau}_\tau.
\end{equation*}
\end{itemize}
\end{definition}

\begin{remark} \label{R:horizontal_lift}
Let $(z^1,\dots,z^n,s^1,\dots,s^d)$ be an admissible coordinate of $p$. Then we can write $\tau$ as follows:
\begin{equation*}
\tau
=
\im\paren{\tau_{i\bar{j}}ds^i\wedge{ds}^{\bar{j}}
+\tau_{i\bar\beta}ds^i\wedge{dz}^{\bar\beta}
+\tau_{\alpha\bar{j}}dz^\alpha\wedge{ds}^{\bar{j}}
+\tau_{\alpha\bar\beta}dz^\alpha\wedge{dz}^{\bar\beta}
}.
\end{equation*}
Since $\tau$ is positive-definite on each fiber $X_y$, the matrix $(\tau_{\alpha\bar\beta})$ is invertible. We denote by $(\tau^{\bar\beta\alpha})$ the inverse matrix. Then it is easy to see that the horizontal lift of $\partial/\partial{s^i}$ is given as follows.
\begin{equation*}
\paren{\pd{}{s^i}}_\tau
=
\pd{}{s^i}-\tau_{i\bar\beta}\tau^{\bar\beta\alpha}\pd{}{z^\alpha},
\end{equation*}
in particular, any horizontal lift with respect to $\tau$ is uniquely determined.
On the other hand, the geodesic curvature $c(\tau)(v_i)$ is computed as follows:
\begin{equation*}
\begin{aligned}
c(\tau)(v_i)
=
\inner{(v_i)_\tau,(v_i)_\tau}_\tau
=
\tau_{i\bar i}
-\tau_{i\bar\beta}\tau^{\bar\beta\alpha}\tau_{\alpha\bar i}.
\end{aligned}
\end{equation*}
\end{remark}

\begin{remark} \label{R:metric_K_X/Y}
The real $(1,1)$-form $\tau$ in Definition \ref{D:lift&curvature} induces a hermitian metric on the relative canonical line bundle $K_{X/Y}$ as follows:

Let $(z^1,\dots,z^n,s^1,\dots,s^d)$ be an admissible coordinate in $X$.
Since $\tau$ is positive-definite on each fiber, $(\tau_{\alpha\bar\beta})$ is positive-definite. 
Hence $\sum\tau_{\alpha\bar\beta}(z,s)dz^\alpha\wedge dz^{\bar\beta}$ gives a K\"ahler metric on each fiber $X_s$. 
It follows that 
\begin{equation}\label{E:metric_on_RCLB}
\det(\tau_{\alpha\bar\beta}(z,s))^{-1}
\end{equation}
gives a hermitian metric on the relative line bundle $K_{X/Y}$. We denote this metric by $h^\tau_{X/Y}$. 
The curvature form $\Theta_\tau=\Theta_{h^\tau_{X/Y}}(K_{X/Y})$ of $h^\tau_{X/Y}$ is given by
\begin{equation*}
\Theta_{h^\tau_{X/Y}}(K_{X/Y})
=
dd^c\log\det(\tau_{\alpha\bar\beta}(z,s)).
\end{equation*}
It is obvious that the cuvature is also written as follows:
\begin{equation*}
\Theta_{h^\tau_{X/Y}}(K_{X/Y})
=
dd^c\log\det\paren{\tau^n\wedge dV_s}.
\end{equation*}
\end{remark}

Suppose that $Y$ is $1$-dimensional. 
Then it is well known (cf, see \cite{Semmes}) that 
\begin{equation}\label{E:Semmes}
\tau^{n+1}
=
c(\tau)\cdot \tau^n\wedge\im{d}s\wedge{d}\bar{s}.
\end{equation}
It follows that if $c(\tau)>0 \; (\ge0)$, then $\tau$ is a positive (semi-positive) real $(1,1)$-form as $\tau$ is positive definite when restricted to $X_s$. On the other hand, \eqref{E:Semmes} says that
\begin{equation*}
p_*\tau^{n+1}
=
\int_{X_s}\tau^{n+1}
=
\int_{X_s}c(\tau)\cdot \tau^n\wedge\im{d}s\wedge{d}\bar{s}.
\end{equation*}
Hence $p_*\tau^{n+1}$ is positive or semi-positive if and only if $\int_{X_s}c(\tau)\tau^n$ is positive or nonnegative, respectively.
For later use, we introduce the following lemma.

\begin{lemma}\label{L:contraction}
The following identity holds:
\begin{equation*}
i_{v_\tau} \tau
=
\im c(\tau)d\bar s.
\end{equation*}
\end{lemma}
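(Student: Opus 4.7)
The proof is essentially a direct coordinate calculation, since both sides of the identity transform covariantly under a change of admissible coordinates. Since $Y$ is assumed one-dimensional in this setting, I would fix an admissible coordinate $(z^1,\dots,z^n,s)$ on $X$ and write
\begin{equation*}
\tau=\sqrt{-1}\left(\tau_{s\bar s}\,ds\wedge d\bar s+\tau_{s\bar\beta}\,ds\wedge dz^{\bar\beta}+\tau_{\alpha\bar s}\,dz^\alpha\wedge d\bar s+\tau_{\alpha\bar\beta}\,dz^\alpha\wedge dz^{\bar\beta}\right),
\end{equation*}
and use the explicit formula for the horizontal lift from Remark \ref{R:horizontal_lift}:
\begin{equation*}
v_\tau=\frac{\partial}{\partial s}-\tau_{s\bar\beta}\tau^{\bar\beta\alpha}\frac{\partial}{\partial z^\alpha}.
\end{equation*}

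Next I would apply $i_{v_\tau}$ to each of the four terms in $\tau$ via the graded Leibniz rule. Because $v_\tau$ is of type $(1,0)$, its pairing with $d\bar s$ and $dz^{\bar\beta}$ is zero, so in each wedge only the $(1,0)$-factor contracts. A short computation yields the four contributions
\begin{equation*}
\sqrt{-1}\tau_{s\bar s}\,d\bar s,\quad \sqrt{-1}\tau_{s\bar\beta}\,dz^{\bar\beta},\quad -\sqrt{-1}\tau_{s\bar\beta}\tau^{\bar\beta\alpha}\tau_{\alpha\bar s}\,d\bar s,\quad -\sqrt{-1}\tau_{s\bar\beta}\tau^{\bar\beta\alpha}\tau_{\alpha\bar\gamma}\,dz^{\bar\gamma}.
\end{equation*}
The key observation is that the second and fourth contributions cancel: indeed, $\tau^{\bar\beta\alpha}\tau_{\alpha\bar\gamma}=\delta^{\bar\beta}_{\bar\gamma}$ shows that the fourth term collapses to $-\sqrt{-1}\tau_{s\bar\beta}\,dz^{\bar\beta}$, which exactly cancels the second. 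This cancellation is precisely what encodes the defining property $\langle v_\tau,W\rangle_\tau=0$ for $W\in T'X_s$.

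Summing the two surviving $d\bar s$-terms gives
\begin{equation*}
i_{v_\tau}\tau=\sqrt{-1}\bigl(\tau_{s\bar s}-\tau_{s\bar\beta}\tau^{\bar\beta\alpha}\tau_{\alpha\bar s}\bigr)\,d\bar s,
\end{equation*}
and the bracketed expression is exactly $c(\tau)=c(\tau)(v)$ by the formula for the geodesic curvature derived in Remark \ref{R:horizontal_lift}. This proves the claim.

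There is no real obstacle here: the content of the lemma is essentially a repackaging of the definition of $v_\tau$ together with the identity \eqref{E:Semmes}, and the only point that requires care is keeping track of signs in the Leibniz expansion of the interior product and confirming that the mixed $dz^{\bar\beta}$ terms cancel. An alternative, coordinate-free presentation would be to note that $i_{v_\tau}\tau$ is $\tau$-dual to $v_\tau$ and then pair it against $\partial/\partial z^\alpha$ and $\partial/\partial s$ separately, obtaining $0$ and $\sqrt{-1} c(\tau)$ respectively; this avoids the cancellation computation but uses the same ingredients.
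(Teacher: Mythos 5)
Your proof is correct and follows essentially the same route as the paper: contract $v_\tau$ into the four terms of $\tau$ in an admissible coordinate, observe that the two $dz^{\bar\beta}$ contributions cancel via $\tau^{\bar\beta\alpha}\tau_{\alpha\bar\gamma}=\delta^{\bar\beta}_{\bar\gamma}$, and identify the surviving coefficient of $d\bar s$ with $c(\tau)$ from Remark \ref{R:horizontal_lift}. The paper's proof is exactly this computation, so there is nothing to add.
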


\begin{proof}
The computation is quite straightforward.
\begin{align*}
i_{v_\tau} \tau
&=
\im\paren{
\tau_{s\bar s}d\bar s
+
\tau_{s\bar\beta}dz^{\bar\beta}
-
\tau_{s\bar\beta}\tau^{\bar\beta\alpha}\tau_{\alpha\bar s}d\bar s
-
\tau_{s\bar\delta}\tau^{\bar\delta\alpha}\tau_{\alpha\bar\beta}dz^{\bar\beta}
}
\\
&=
\im\paren{
\tau_{s\bar s}d\bar s
-
\tau_{s\bar\beta}\tau^{\bar\beta\alpha}\tau_{\alpha\bar s}d\bar s
}
=
\im c(\tau)d\bar s.
\end{align*}
This completes the proof.
\end{proof}

\subsection{Kodaira-Spencer classes and Direct image bundles}

Let $p:X\rightarrow Y$ be a smooth family of compact K\"ahler manifolds. We denote the Kodaira-Spencer map for the family $p:X\rightarrow Y$ at a given point $y\in Y$ by
\begin{equation*}
K_y:T'_y Y\rightarrow
H^1(X_y,T'X_y).
\end{equation*}
The Kodaira-Spencer map is induced by the edge homomorphism for the short exact sequence
\begin{equation*}
0\rightarrow
T'_{X/Y}
\rightarrow
T'X
\rightarrow
p^*T'Y
\rightarrow0.
\end{equation*}
If $V\in T'_y Y$ is a tangent vector, and if $V+b^\alpha\pd{}{z^\alpha}$ is any smooth lifting of $V$ along $X_y$, then 
\begin{equation*}
\bar\partial
\paren{
	V+b^\alpha\pd{}{z^\alpha}
}
=
\pd{b^\alpha}{z^{\bar\beta}}
\pd{}{z^\alpha}
\otimes
dz^{\bar\beta}
\end{equation*}
is a $\bar\partial$-closed form on $X$, which represents $K_y(V)$, i.e.,
\begin{equation*}
K_y(V)
=
\left[
\pd{b^\alpha}{z^{\bar\beta}}
\pd{}{z^\alpha}
\otimes
dz^{\bar\beta}
\right]
\in H^{0,1}(X_y,T'X_y).
\end{equation*}
This cohomology class $K_y(V)$ is called the \emph{Kodaira-Spencer class} of $V$. The celebrated theorem of Kodaira and Spencer says that if the Kodaira-Spencer class vanishes locally, then the family is locally trivial (\cite{Kodaira:Spencer}, see also \cite{Kodaira}).
\medskip

The direct image sheaf $E:=p_*(K_{X/Y})$ of $K_{X/Y}$ is defined by the sheaf over $Y$ whose fiber $E_y$ is given by
\begin{equation*}
E_y=H^0(X_y,K_{X_y}).
\end{equation*}
It is remarkable to note that this sheaf is indeed a holomorphic vector bundle by the Ohsawa-Takegoshi extension theorem (for more details, see Section 4 in \cite{Berndtsson1}). $E$ is a hermitian vector bundle with $L^2$ metric defined by following: For $u_y, v_y\in E_y$, define $\inner{u_y,v_y}$ by
\begin{equation*}
\inner{u_y,v_y}_y^2
=
\int_{X_y}c_n u_y\wedge \overline{v_y}
\end{equation*}
where $c_n=(\im)^{n^2}$ chosen to make the form positive. The Kodaira-Spencer class acts on $u_y\in E_y$ as follows: Let $k_y(V)$ be any representative of $K_y(V)$, i.e., $T'X_y$-valued $(0,1)$-form in $K_y(V)$, which locally decomposes as 
\begin{equation*}
k_y=\zeta\otimes w
\end{equation*}
where $\zeta$ is a $(0,1)$-form and $w$ is a vector field of type $(1,0)$. Then $k_y(V)$ acts on $u_y$ by
\begin{equation*}
k_y(V)\cdot u_y=\zeta\wedge(i_w(u_y)),
\end{equation*}
where $i_w$ is the contraction. This gives a globally defined $\bar\partial$-closed form of type $(n-1,1)$ and
\begin{equation*}
K_y(V)\cdot u_y
:=
\bparen{k_y(V)\cdot u_y
}
\in H^{n-1,1}(X_y).
\end{equation*}
The following theorem due to Griffiths says the curvature of $E$ is computed in terms of Kodaira-Spencer classes (\cite{Griffiths}, see also \cite{Berndtsson2}).

\begin{theorem}\label{T:Griffiths}
Let $\Theta(E)$ be the curvature of $E$ with $L^2$-metric. Then for $V\in T_y'Y$,
\begin{equation}\label{E:Griffiths}
\inner{\Theta_{V\bar V}(E)u,u}=\norm{K_y(V)\cdot u}^2,
\end{equation}
where $\norm{K_y(V)\cdot u}$ is the norm of its unique harmonic representative. 
In particular, it does not depend on the choice of K\"ahler metric.
\end{theorem}

\section{Approximations of complex Monge-Amp\`ere equations}\label{S:approximation}

In this section, we discuss approximations of a solution of complex Monge-Amp\`ere equation \eqref{E:0} in terms of the solutions of \eqref{E:1}. First we consider the approximation on a single compact K\"ahler manifold. After that, we apply the approximation procedure to a family of complex Monge-Amp\`ere equations. First, we recall the existence and uniqueness theorem of complex Monge-Amp\`ere equations due to Aubin and Yau.

Let $(X,\omega)$ be a compact K\"ahler manifold. Let $f$ be a smooth function on $X$. The complex Monge-Amp\`ere equation is given as follows:
\begin{equation} \label{E:CMAE}
\begin{aligned}
\paren{\omega+dd^c\vp}^n
&=
e^{\lambda\vp+f}\omega^n,
\\
\omega+dd^c\vp
&>0.
\end{aligned}
\end{equation}
This fully nonlinear complex partial differential equation was first raised by E. Calabi. 
If $\lambda\ge0$, then we can solve the PDE.

\begin{theorem} \label{T:AY}
The following holds:
\begin{itemize}
\item [1.] (Aubin/Yau \cite{Aubin1, Yau}) If $\lambda>0$, then there exists a unique smooth function $\vp$ satisfying \eqref{E:CMAE} for every smooth function $f\in C^\infty(X)$. 
\item [2.] (Yau \cite{Yau}) If $\lambda=0$, then there exists a smooth function $\vp$ satisfying \eqref{E:CMAE} for $f\in C^\infty(X)$ such that $\int_X e^f\omega^n=\int_X \omega^n$. Moreover, the solution is unique up to the addition of constants.
\end{itemize}
\end{theorem}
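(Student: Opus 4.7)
The plan is to use the continuity method combined with a priori estimates, following the classical approach of Aubin and Yau. For each case, consider the one-parameter family of equations
\begin{equation*}
(\omega + dd^c \vp_t)^n = e^{\lambda \vp_t + t f}\omega^n, \quad t\in[0,1],
\end{equation*}
and let $S = \{t\in[0,1] : \text{the equation is solvable in } C^{k,\alpha}\}$. At $t=0$ the function $\vp_0 = 0$ works (after normalization in case 2, using the compatibility $\int e^f \omega^n = \int \omega^n$ to conclude $\int \omega^n = \int \omega^n$ at $t=0$ trivially). Showing $S = [0,1]$ reduces to proving openness and closedness.

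For openness, apply the implicit function theorem to the operator $F(\vp,t) = \log(\omega_\vp^n/\omega^n) - \lambda \vp - tf$. Its linearization at a solution is $\Delta_{\omega_\vp} - \lambda$. When $\lambda > 0$ this operator is an isomorphism on $C^{k,\alpha}(X)$, giving openness directly. When $\lambda = 0$, the kernel consists of constants, so I would restrict to the codimension-one subspace $\{\vp : \int \vp\, \omega_\vp^n = 0\}$ and exploit self-adjointness of $\Delta_{\omega_\vp}$.

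For closedness, the main work is establishing uniform $C^{2,\alpha}$ bounds on $\vp_t$ independent of $t$. The $C^0$ bound differs by case. For $\lambda > 0$ it is immediate from the maximum principle: at a maximum point $dd^c\vp \le 0$, so $e^{\lambda \vp + tf}\le 1$, forcing $\vp \le -tf/\lambda$, with the symmetric argument at a minimum. For $\lambda = 0$ no pointwise trick is available and one must run Yau's $L^p \to L^\infty$ Moser iteration (or the integral scheme on $\int e^{k\vp}\omega^n$), using the normalization to bootstrap. Once $\|\vp\|_{C^0}$ is controlled, apply Yau's Laplacian estimate to $e^{-A\vp}(n + \Delta\vp)$ with $A$ large depending on a lower bound of the bisectional curvature of $\omega$, yielding a bound on $\Delta\vp$. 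This combined with the equation gives uniform ellipticity of the linearization, so Evans--Krylov (or Calabi's $C^3$ estimate) upgrades to $C^{2,\alpha}$, and Schauder bootstrap to $C^\infty$.

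For uniqueness, in case 1 suppose $\vp_1,\vp_2$ both solve and evaluate at a maximum of $\vp_1 - \vp_2$: there $\omega_{\vp_1}^n \le \omega_{\vp_2}^n$ while the right-hand side yields $e^{\lambda\vp_1} > e^{\lambda\vp_2}$, a contradiction unless $\vp_1 \le \vp_2$ everywhere; symmetry finishes it. In case 2, compute
\begin{equation*}
0 = \int_X (\vp_1 - \vp_2)(\omega_{\vp_1}^n - \omega_{\vp_2}^n),
\end{equation*}
then expand the difference via the telescoping identity $\omega_{\vp_1}^n - \omega_{\vp_2}^n = dd^c(\vp_1-\vp_2)\wedge \sum_{k=0}^{n-1}\omega_{\vp_1}^k\wedge \omega_{\vp_2}^{n-1-k}$ and integrate by parts to produce a nonnegative expression in $|\partial(\vp_1-\vp_2)|^2$ that forces $\vp_1-\vp_2$ to be constant. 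The historically hardest step, and the main obstacle in this plan, is Yau's $C^0$ estimate in the $\lambda = 0$ case, since there is no maximum principle shortcut and one is forced into a delicate nonlinear iteration argument that also handles the interplay with the volume compatibility condition.
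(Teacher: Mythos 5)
The paper does not actually prove Theorem \ref{T:AY}: it is quoted as the classical Aubin--Yau/Calabi--Yau theorem with references to \cite{Aubin1, Yau}, so there is no internal argument to measure yours against. Your sketch is the standard continuity-method proof, and its architecture --- openness via the implicit function theorem for $F(\vp,t)=\log(\omega_\vp^n/\omega^n)-\lambda\vp-tf$, closedness via the chain of a priori estimates ($C^0$ by the maximum principle for $\lambda>0$ and by Moser iteration for $\lambda=0$, then Yau's Laplacian estimate applied to $e^{-A\vp}(n+\Delta\vp)$, then Evans--Krylov or Calabi's third-order estimate, then Schauder bootstrap), and uniqueness by the maximum principle for $\lambda>0$ and by the telescoping integration by parts for $\lambda=0$ --- is the accepted one.

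One step as written would fail. In the case $\lambda=0$ the path $(\omega+dd^c\vp_t)^n=e^{tf}\omega^n$ is not solvable for $0<t<1$: integrating both sides over $X$ forces the compatibility condition $\int_X e^{tf}\omega^n=\int_X\omega^n$, which holds at $t=0$ and $t=1$ but not for intermediate $t$ unless $f$ is constant, so your set $S$ cannot be open at $t=0$. The standard repair is to insert a normalizing constant, $(\omega+dd^c\vp_t)^n=c_t\,e^{tf}\omega^n$ with $c_t=\int_X\omega^n\big/\int_X e^{tf}\omega^n$, which depends smoothly on $t$ and equals $1$ at both endpoints; with this correction (and the corresponding slice/quotient bookkeeping in the openness step, since the linearization $\Delta_{\omega_\vp}$ has the constants as kernel and the mean-zero functions as image), the rest of your argument goes through and you correctly identify Yau's $C^0$ estimate as the genuinely hard point.
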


\subsection{Approximation on a compact K\"ahler manifold}
\label{SS:approximation1}

Let $(M,\omega)$ be a compact K\"ahler manifold and $f$ be a smooth function on $M$ satisfying
$\int_M e^f\omega^n=\int_M\omega^n$.
Consider the following complex Monge-Amp\`ere equation:
\begin{equation} \label{E:CMAE0}
\begin{aligned}
\paren{\omega+dd^c\varphi}^n &= e^f\omega^n, \\
\omega+dd^c&\varphi>0.
\end{aligned}
\end{equation}
Theorem \ref{T:AY} implies that there exists a solution which is unique up to addition of constants. 

Let $\{f_\ve\}$ be a sequence of smooth functions in $M$ which converges to $f$ as $\ve$ goes to $0$ in $C^{k,\alpha}(M)$-topology for any $k\in\NN$ and $\alpha\in(0,1)$. We want to approximate a solution of \eqref{E:CMAE0} by the solutions $\vp_\ve$ of the following complex Monge-Amp\`ere equations:
\begin{equation} \label{E:CMAE1}
\begin{aligned}
\paren{\omega+dd^c\vp_\ve}^n &= 
e^{\ve\vp_\ve+f_\ve}\omega^n \\
\omega+dd^c&\vp_\ve>0,
\end{aligned}
\end{equation}
as $\ve\rightarrow0$.  Note that if $\ve\rightarrow0$, then Equation \eqref{E:CMAE1} converges to Equation \eqref{E:CMAE0}. 

The convention all over this paper is that we will use the same letter ``$C$'' to denote a generic constant, which may change from one line to another, but it is independent of the pertinent parameters involved (especially $\ve$).

\begin{proposition} \label{P:approximation1}
For each $\ve$ with $0<\ve\le1$, let $\vp_\ve$ be the solution of \eqref{E:CMAE1}. Then for any $k\in\NN$ and $\alpha\in(0,1)$, there exists a constant $C>0$ which depend only on $k$, $\alpha$, the geometry of $(M,\omega)$ and the function $f$ such that 
\begin{equation*}
\norm{\vp_\ve}_{C^{k,\alpha}(M)}<C.
\end{equation*}
In particular, $\{\vp_\ve\}$ is a relatively compact subset of $C^{k,\alpha}(M)$ for any positive integer $k$ and $\alpha\in(0,1)$.
\end{proposition}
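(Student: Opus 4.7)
The plan is to execute the standard a priori estimates for the complex Monge--Amp\`ere equation, while keeping all constants independent of $\ve\in(0,1]$. The overall structure is: a uniform $C^0$ bound on $\vp_\ve$, a uniform Laplacian/$C^2$ bound via Yau's method, then Evans--Krylov to reach $C^{2,\alpha}$, and finally a Schauder bootstrap. The main obstacle will be the $C^0$ step, because a naive application of the maximum principle at the extrema of $\vp_\ve$ only yields $\ve\|\vp_\ve\|_\infty\le\|f_\ve\|_\infty$, which degenerates like $1/\ve$.

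To obtain the uniform $C^0$ bound I would first normalize by setting $\psi_\ve:=\vp_\ve-\sup_M\vp_\ve$, so that $\psi_\ve\le 0$ and $\sup_M\psi_\ve=0$. Rewriting the equation as
\beq
(\omega+dd^c\psi_\ve)^n=e^{\ve\psi_\ve}\cdot e^{\ve\sup_M\vp_\ve+f_\ve}\,\omega^n,
\eeq
and using $\psi_\ve\le 0$ together with the max-principle bound $\ve\sup_M\vp_\ve\le\|f_\ve\|_\infty$ and the $C^0$ control on $f_\ve$, the right-hand side stays uniformly bounded in $L^\infty$. The Aubin--Yau/Kolodziej $L^\infty$ estimate for a normalized $\omega$-psh solution of the Monge--Amp\`ere equation then yields $\|\psi_\ve\|_\infty\le C$ uniformly in $\ve$. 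To pin down $\sup_M\vp_\ve$ itself, I would integrate the original equation: Stokes's theorem gives the identity $\int_M e^{\ve\vp_\ve+f_\ve}\omega^n=\int_M\omega^n$, so
\beq
e^{\ve\sup_M\vp_\ve}=\int_M\omega^n\Big/\int_M e^{\ve\psi_\ve+f_\ve}\omega^n.
\eeq
Using $\|\psi_\ve\|_\infty\le C$, $f_\ve\to f$, and the compatibility $\int_M e^f\omega^n=\int_M\omega^n$, the right-hand side tends to $1$ at rate $O(\ve)$, and therefore $\sup_M\vp_\ve$ is uniformly bounded.

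For the $C^2$ step I would apply Yau's classical Laplacian estimate to $\omega_\ve:=\omega+dd^c\vp_\ve$: a Chern--Lu type computation produces
\beq
\Delta_{\omega_\ve}\log\mathrm{tr}_\omega\omega_\ve\ge-C_0\,\mathrm{tr}_{\omega_\ve}\omega-C_1,
\eeq
with $C_0,C_1$ depending only on $\|f_\ve\|_{C^2}$ and the curvature of $\omega$. The maximum principle applied to $\log\mathrm{tr}_\omega\omega_\ve-A\vp_\ve$ for a large constant $A$, combined with the $C^0$ bound from the previous step, yields a uniform bound on $\mathrm{tr}_\omega\omega_\ve$, and hence on $\Delta\vp_\ve$. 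With uniform $C^0$ and $C^2$ bounds the Monge--Amp\`ere operator is uniformly elliptic along the solution family, so Evans--Krylov produces a uniform $C^{2,\alpha}$ bound. Differentiating the equation and iterating Schauder estimates on the linearized equation, using $f_\ve\to f$ in every $C^{k,\alpha}$, then yields uniform $C^{k,\alpha}$ bounds for all $k$ and $\alpha$; the claimed relative compactness in $C^{k,\alpha}$ follows from Arzel\`a--Ascoli by upgrading slightly to a $C^{k,\alpha'}$ bound with $\alpha'>\alpha$.
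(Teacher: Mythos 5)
Your proposal is correct in its overall architecture ($C^0\to C^2\to C^{2,\alpha}\to C^{k,\alpha}$) and reaches the same conclusion, but it substitutes different, equally standard, ingredients at the two nontrivial steps. For the uniform estimate, the paper first bounds $\sup_M\vp_\ve$ from above by combining Jensen's inequality (which yields $\int_M\vp_\ve e^{f_\ve}\omega^n\le 0$) with the Hartogs-type compactness of $\omega$-plurisubharmonic functions, and only then invokes Ko{\l}odziej's estimate; you instead get $e^{\ve\sup_M\vp_\ve}\le e^{\|f_\ve\|_{C^0}}$ from the maximum principle, apply Ko{\l}odziej to the sup-normalized potential to control the oscillation, and recover $\sup_M\vp_\ve$ from the integrated equation. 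For the Laplacian bound, the paper quotes the P\v{a}un/Di Nezza--Lu estimate together with Zeriahi's uniform Skoda integrability theorem, whereas you run the classical Yau/Chern--Lu maximum-principle argument on $\log\mathrm{tr}_\omega\omega_\ve-A\vp_\ve$; since the densities here are uniformly bounded in $C^2$ and the $C^0$ bound is already in hand, the classical argument suffices and is more elementary. The Evans--Krylov and Schauder steps coincide with the paper's.

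One caveat: your assertion that $\int_M\omega^n\big/\int_M e^{\ve\psi_\ve+f_\ve}\omega^n=1+O(\ve)$ requires $\int_M e^{f_\ve}\omega^n=\int_M\omega^n+O(\ve)$, which does not follow from $f_\ve\to f$ in $C^{k,\alpha}$ without a rate. If, say, $f_\ve=f+\sqrt\ve$, then $\sup_M\vp_\ve\sim-\ve^{-1/2}\to-\infty$ and the uniform $C^0$ bound fails, so this is the one step of your argument that is not airtight as written. To be fair, the paper's own proof carries the identical implicit assumption (its Jensen step treats $e^{f_\ve}\omega^n$ as a probability measure), and in the only application made of the proposition one has $f_\ve\equiv\eta$ independent of $\ve$ with $\int_M e^{\eta}\omega^n=\int_M\omega^n$ exactly, so the issue is harmless there. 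If you want the proposition as literally stated, you should add the normalization $\int_M e^{f_\ve}\omega^n=\int_M\omega^n$ (or an $O(\ve)$ rate for this integral) to the hypotheses; with that proviso your proof is complete.
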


\begin{proof}
We may assume that $\mathrm{Vol}(M)=\int_X\omega^n=1$.
The first step is obtaining a uniform upper bound for $\varphi_\varepsilon$. For each $\varepsilon>0$, the solution $\varphi_\varepsilon$ of \eqref{E:CMAE1} satisfies that 
\begin{equation*}
1=\int_M\paren{\omega+dd^c\vp_\ve}^n
=
\int_M e^{\ve\vp_\ve} e^{f_\ve}\omega^n
\end{equation*}
Then Jensen inequality implies that
\begin{equation*}
1\ge
\exp\paren{\int_M\ve\vp_\ve e^{f_\ve}\omega^n},
\end{equation*}
it is equivalent to
\begin{equation*}
\int_M\vp_\ve e^{f_\ve}\omega^n
\le
0.
\end{equation*}
Note that $f_\ve$ converges to $f$ as $\ve\rightarrow0$. 
The Hartogs lemma for quasi-plurisubharmonic functions implies that
\begin{equation}\label{E:upper}
\sup_M\vp_\ve
<
C,
\end{equation}
where $C$ is a constant which depends only on the geometry of $(M,\omega)$ and $f$ (\cite{Guedj_Zerihai}). Here we recall the simple version of Ko{\l}odziej's uniform estimates (for the general theorem, see \cite{Kolodziej1, Kolodziej2}).

\begin{theorem}\label{T:uniform}
Let $(M,\omega)$ be a compact K\"ahler manifold. Assume that $\varphi$ satisfies the following complex Monge-Amp\`ere equation:
\begin{align*}
\paren{\omega+dd^c\varphi}^n 
&=
F\omega^n, \\
\omega+dd^c\varphi
&>
0.
\end{align*}
Then
\begin{equation*}
\norm\vp_{C^0(M)}\le
C
\end{equation*}
where $C>0$ depends only on $(M,\omega)$ and on an upper bound for $\norm{F}_p$ for some $1<p\le\infty$.
\end{theorem}

If we set $F=e^{\ve\vp_\ve+f_\ve}$, then $\abs{F}<C$ for some $C>0$ by \eqref{E:upper}. Then it follows from Theorem \ref{T:uniform} that 
\begin{equation}\label{E:uniform}
\norm{\vp_\ve}_{C^0(M)}<C
\end{equation}
for some $C>0$ which depends only on $M$ and the function $f$.
\medskip

The second step is obtaining the Laplacian estimates. We recall the following theorem in \cite{DiNezza_Lu}, which is essentially due to M. P\v{a}un  (\cite{Paun1}, cf. see \cite{Siu}).
\begin{theorem}\label{T:Laplacian}
Let $\psi^+$ and $\psi^-$ be smooth quasi-plurisubharmonic functions on $M$. Let $\varphi\in~C^\infty(M)$ be such that $\sup_M\varphi=0$ and
\begin{equation*}
(\omega+dd^c\varphi)^n
=
e^{\psi^+-\psi^-}\omega^n.
\end{equation*}
Assume given a constant $C>0$ such that
\begin{equation*}
dd^c\psi^\pm\ge-C\omega,
\;\;\;
\sup_M\psi^+\le C.
\end{equation*}
Assume also that the holomorphic bisectional curvature of $\omega$ is bounded from below by $-C$. Then there exists $A>0$ depending on $C$ and $\int_Me^{-2(4C+1)\varphi}\omega^n$ such that
\begin{equation*}
0\le
n+\Delta_\omega\varphi
\le
Ae^{-2\psi^-}.
\end{equation*}
\end{theorem}

We take $\psi^+=\ve\vp_\ve+f_\ve$ and $\psi^-=0$. Since $f_\ve$ converges to $f$ as $\ve\rightarrow0$ and every $\vp_\ve$ satisfies that
\begin{equation*}
dd^c\vp_\ve > -\omega,
\end{equation*}
it follows from \eqref{E:uniform} that $\psi^+$ satisfies the hypothesis of Theorem~\ref{T:Laplacian}. Note that $\{\vp_\ve\}_{0<\ve\le1}$ is a relatively compact subset of $L^1(X,\omega)$. This implies the Laplacian estimates for $\vp_\ve$:
\begin{equation*}
\abs{\Delta_\omega\vp_\ve}<C
\end{equation*}
for some constant $C>0$ which depends only on the geometry of $(M,\omega)$ and the function $f$ by the Uniform Skoda Integrability Theorem due to Zeriahi (\cite{Zeriahi}).
\medskip

The final step is $C^{2,\alpha}(M)$-estimate. For $k\ge2$ and $\alpha\in(0,1)$, the standard Evans-Krylov method (\cite{Evans, Krylov}) and Schauder estimates (cf, see \cite{Aubin2, Gilbarg_Trudinger}) imply
\begin{equation}\label{E:Ck-estimate}
\norm{\vp_\ve}_{C^{k,\alpha}(X)}
\le
C,
\end{equation}
where $C$ is a positive constant which depends only on $k,\alpha$, the geometry of $(M,\omega)$ and the function $f$. This completes the proof.
\end{proof}

Proposition \ref{P:approximation1} implies that there exists a $\hat\vp\in C^\infty(M)$ such that $\vp_\ve\rightarrow\hat\vp$ as $\ve\rightarrow0$ by passing through a subsequence. However, $\vp_\ve$ converges without choosing a subsequence.
\begin{corollary}\label{C:convergence_vp}
The solution $\vp_\ve$ converges to $\vp$ which satisfies the following normalization condition
\begin{equation*}
\int_M\vp e^f\omega^n=0.
\end{equation*}
\end{corollary}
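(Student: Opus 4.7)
The strategy is to combine the $C^{k,\alpha}$ precompactness of $\{\vp_\ve\}$ from Proposition \ref{P:approximation1} with the uniqueness-up-to-constants clause of Theorem \ref{T:AY}(2), thereby reducing the problem to identifying the additive constant of the limit; this constant will then be extracted from the integrated Monge-Amp\`ere identity. By Proposition \ref{P:approximation1}, any sequence $\ve_j\to 0$ admits a subsequence along which $\vp_{\ve_j}\to\hat\vp$ in $C^\infty(M)$. Passing to the limit in \eqref{E:CMAE1}, using that $e^{\ve_j\vp_{\ve_j}}\to 1$ uniformly (by the uniform $C^0$ bound) and $f_{\ve_j}\to f$, one sees that $\hat\vp$ solves \eqref{E:CMAE0}. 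By Theorem \ref{T:AY}(2), $\hat\vp$ is thus determined up to an additive constant, and the task reduces to proving $\int_M \hat\vp\, e^f\omega^n = 0$.

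To pin down the constant I would integrate \eqref{E:CMAE1} over $M$; Stokes' theorem kills the $dd^c\vp_\ve$ contribution, so
\begin{equation*}
\int_M e^{\ve\vp_\ve + f_\ve}\,\omega^n = \int_M \omega^n = \int_M e^f\,\omega^n.
\end{equation*}
Rearranging and dividing by $\ve$ gives
\begin{equation*}
\int_M \frac{e^{\ve\vp_\ve}-1}{\ve}\,e^{f_\ve}\,\omega^n = \frac{1}{\ve}\int_M(e^f-e^{f_\ve})\,\omega^n.
\end{equation*}
The uniform Taylor bound $\left|\frac{e^{\ve t}-1}{\ve}-t\right|\le C\ve$ for $|t|\le C$, together with $\sup_M|\vp_\ve|\le C$ and the $C^0$ convergence $f_\ve\to f$, shows that the left-hand side converges to $\int_M\hat\vp\, e^f\omega^n$ along the chosen subsequence. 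Provided the approximation satisfies $\int_M e^{f_\ve}\omega^n = \int_M e^f\omega^n$ for every $\ve$ --- automatic in the natural choice $f_\ve\equiv f$, and enforceable in general by an additive normalization of $f_\ve$ --- the right-hand side is identically zero, so $\int_M\hat\vp\, e^f\omega^n = 0$.

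Since every subsequential limit now solves \eqref{E:CMAE0} with the same normalization, all limits coincide, and the full family $\vp_\ve$ converges in $C^\infty(M)$ to the unique such $\vp$. The delicate point I expect will need most care is precisely the vanishing of $\ve^{-1}\int_M(e^f - e^{f_\ve})\,\omega^n$: it places a compatibility constraint on how $f_\ve$ is allowed to approximate $f$, since otherwise the limit $\hat\vp$ would be shifted by a nontrivial additive constant determined by the first-order rate of convergence of $f_\ve$ to $f$. Once this is handled the rest of the argument is routine, relying only on the a priori estimates already established and the uniqueness theorem of Aubin-Yau.
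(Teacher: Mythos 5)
Your proof is correct and follows essentially the same route as the paper: integrate the Monge--Amp\`ere equation over $M$, Taylor-expand $e^{\ve\vp_\ve}$ using the uniform bound from Proposition \ref{P:approximation1} to conclude $\int_M\vp_\ve e^{f_\ve}\omega^n\to0$, and combine this with precompactness and the Aubin--Yau uniqueness-up-to-constants to identify the limit. The compatibility condition $\int_M e^{f_\ve}\omega^n=\int_M e^f\omega^n$ that you flag as the delicate point is indeed needed and is handled implicitly in the paper, whose own proof simply takes $f_\ve=f$ in the approximating equation (and contains a typo, normalizing $\int_M\vp_0e^f\omega^n=1$ rather than $0$); your version is, if anything, the more careful one.
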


\begin{proof}
Let $\vp_0$ be the unique solution of \eqref{E:CMAE0} which satisfies that $\int_M\vp_0e^f\omega^n=1$.
It is enough to show that $\vp_\ve$ satisfies 
$\int_M\vp_\ve e^f\omega^n\rightarrow0$ as $\ve\rightarrow0$.
By \eqref{E:Ck-estimate}, we have
\begin{equation*}
e^{\ve\vp_\ve}
=
1+\ve\vp_\ve+o(\ve).
\end{equation*}
On the other hand, 
\begin{equation*}
1
=
\int_M\omega^n
=
\int_M\paren{\omega+dd^c\vp_\ve}^n
=
\int_M e^{\ve\vp_\ve} e^{f_\ve}\omega^n
=
\int_M
\paren{1+\ve\vp_\ve+o(\ve)}
e^{f_\ve}\omega^n,
\end{equation*}
so it follows that
\begin{equation*}
\ve\int_M\vp_\ve e^{f_\ve}\omega^n
=
o(\ve)\int_X e^{f_\ve}\omega^n.
\end{equation*}
This implies the conclusion.
\end{proof}

\subsection{Approximation on a family of complex Monge-Amp\'ere equations}
\label{SS:AFCMAE}
Let $p:X^{n+d}\rightarrow Y^d$ be a smooth family of compact K\"ahler manifolds and $\omega$ be a fixed K\"ahler form on $X$. Let $\xi$ be a differential form of degree $2n+r$ on $X$. 
The fiber integral is a differential form of degree $r$ on $Y$, which is defined as follows: Fix a point $y\in Y$ and let $(U,s=(s^1,\dots,s^d))$ be a coordinate centered at $y$ such that there exists a $C^\infty$ trivialization of the family:
\begin{equation*}
\Phi:X_0\times U\rightarrow p^{-1}(U).
\end{equation*}
In an admissible coordinate $(z,s)$, the pull-back $\Phi^*\xi$ is of the form
\begin{equation*}
\sum\xi_k(z,s) dV_z\wedge d\sigma^{k_1}\wedge\cdots\wedge d\sigma^{k_r},
\end{equation*}
where the $\sigma^{k_j}$ run through the real and imaginay parts of $s^j$ and $dV_z$ denotes the relative Euclidean volume form. Now the fiber integral is defined by
\begin{equation*}
\int_{X/Y}\xi
=
\int_{X_0\times Y/Y}
\Phi^*\xi
=
\sum\paren{\int_{X_s}\xi_k(z,s)dV_z}
d\sigma^{k_1}\wedge\cdots\wedge d\sigma^{k_r}.
\end{equation*}
Note that this definition is independent of the choice of coordinates and differentiable trivializations. The fiber integral coincides with the push-forward of the corresponding current. Hence, if $\xi$ is a differentiable form of type $(n+r,n+s)$, then the fiber integral is of type $(r,s)$. In particular, if $\xi$ be a differentiable form of type $(n,n)$ on $X$, then $\int_{X_s}\xi$ is a smooth function on $Y$. Moreover, we have the following properties (for the details, see \cite{Schumacher}.):
\begin{itemize}
\item [(\romannumeral1)] Fiber integration coincides with the push forward of a form, which is defined as follows:
For a form $\xi$ on $X$, $p_*\xi$ is defined by the form on $Y$ which satisfies
\begin{equation*}
\int_Y (p_*\xi)\wedge\zeta
=
\int_X \xi\wedge(p^*\zeta)
\end{equation*}
for any form $\zeta$ on $Y$.
\item [(\romannumeral2)]Fiber integration commutes with taking exterior derivatives:
	\begin{equation*}
	d\int_{X_s}\xi = \int_{X_s}d\xi
	\end{equation*}	 
\item [(\romannumeral3)] For a smooth form $\xi$ of type $(n,n)$, 
	\begin{equation*}
	\pd{}{s^i}\int_{X_s}\xi=\int_{X_s}L_V(\xi)
	\end{equation*}
	for any smooth lifting $V$ of $\partial/\partial s^i$ on $X$.
\end{itemize}
Note that the volume of a fiber does not change, namely, (\romannumeral2) implies that
\begin{equation*}
d\mathrm{Vol}_{\omega\vert_{X_s}}(X_s)
=
d\int_{X_s}\omega^n
=
\int_{X_s}d\omega^n
=0.
\end{equation*}
Hence we may assume that $\mathrm{Vol}_{\omega\vert_{X_y}}(X_y)=1$ for every $y\in Y$. The third property (\romannumeral3) will be used in Section \ref{S:app_geo_curv}. 
\medskip

From now on, we consider a smooth family $p:X\rightarrow\DD$ of compact K\"ahler manifolds over the unit disc $\DD$ in $\CC$. 
Let $\omega$ be a K\"ahler form on $X$.
Under an admissible coordinate $(z^1,\dots,z^n,s)$ in $X$, $\omega$ is written as follows:
\begin{equation}\label{E:local_omega}
\omega
=
\im\paren{g_{s\bar s}ds\wedge d\bar s
+g_{s\bar\beta}ds\wedge{dz}^{\bar\beta} 
+g_{\alpha\bar s}dz^\alpha\wedge d\bar s
+g_{\alpha\bar\beta}dz^\alpha\wedge{dz}^{\bar\beta}
}.
\end{equation}
For $0<\ve\le1$, let $\{f_\ve\}$ be a sequence of smooth functions on $X$. We consider the following fiberwise complex Monge-Amp\`ere equations:
\begin{equation} \label{E:ACMAE}
\begin{aligned}
\paren{\omega_y+dd^c\vp_y}^n
&=
e^{\ve\vp_y+f_\ve\vert_{X_y}}(\omega_y)^n,
\\
\omega_y+dd^c\vp_y
&>0
\end{aligned}
\end{equation}
on $X_y$ for $y\in\DD$. Theorem \ref{T:AY} implies that for each $y$, there exists a unique solution of \eqref{E:ACMAE}, call it $\vp_{y,\ve}\in C^\infty(X_y)$. It is remarkable to note that the function $\vp_\ve$ defined by 
\begin{equation*}
\vp_\ve(x)=\vp_{y,\ve}(x),
\end{equation*}
where $y=p(x)$, is a smooth function on $X$. This follows from the openness analysis of the continuity method for complex Monge-Amp\`ere equations and the implicit function theorem (\cite{Yau}). By Section \ref{SS:approximation1}, there exists a constant $C_y>0$ such that
\begin{equation}\label{E:single_estimate}
\norm{\vp_\ve}_{C^{k,\alpha}(X_y)}
\le
C_y
\end{equation}
where $C_y$ does not depend on $\ve$. Since we are now considering a local property on $y$, we may assume that $C=C_y$ does not depend on $y$.

In this section, we consider the $C^{k,\alpha}$-estimates for $V\vp_\ve$ and $\bar VV\vp_\ve$ on a fixed fiber $X_y$, where $V$ is any smooth lifting of $\partial/\partial s$ written as follows:
\begin{equation*}
V=\pd{}{s}+a\ind{s}{\gamma}{}\pd{}{z^\gamma}.
\end{equation*}
Before going further, we introduce the following proposition.

\begin{proposition}\label{P:Key_Prop}
Let $(X,\omega)$ be a compact K\"ahler manifold.
Let $\{\rho_\ve\}_{\ve\in I}$ be a family of K\"ahler metrics on $X$ which are uniformly equivalent to $\omega$, i.e., there exists a constant $C_1>0$ such that
\begin{equation*}
\frac{1}{C_1}\omega<\rho_\ve<C_1\omega
\;\;\;
\text{for all}\;\;\;
\ve\in I.
\end{equation*}
Let $u_\ve$ be a solution of the following PDE:
\begin{equation}\label{E:laplacian}
-\Delta_{\rho_\ve}u_\ve
+
\ve u_\ve
=
R_\ve,
\end{equation}
where $R_\ve$ is a smooth function on $X$ with 
$$
\norm{R_\ve}_{C^{k,\alpha}(X)}<C_2.
$$
Suppose that
\begin{equation*}
\abs{\int_X u_\ve\omega^n}<C_3.
\end{equation*}
Then there exists a uniform constant $C>0$ which depends only on $C_1$, $C_2$, $C_3$ and the geometry of $(X,\omega)$ such that
\begin{equation*}
\norm{u_\ve}_{C^{k,\alpha}(X)}<C.
\end{equation*}
\end{proposition}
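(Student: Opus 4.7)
The plan is to handle the near-degeneracy of $-\Delta_{\rho_\ve}+\ve I$ as $\ve\to 0$ by decomposing off the almost-kernel (the constants), use the hypothesis on $\int_X u_\ve\,\omega^n$ to pin that mode down, and then run standard elliptic regularity to upgrade from an $L^2$ bound to the desired $C^{k,\alpha}$ bound. I begin by setting
\beq
\bar u_\ve := \frac{1}{\int_X\omega^n}\int_X u_\ve\,\omega^n,\quad v_\ve:=u_\ve-\bar u_\ve,
\eeq
so that $|\bar u_\ve|\le C$ by hypothesis, $\int_X v_\ve\,\omega^n=0$, and $v_\ve$ solves
\beq
-\Delta_{\rho_\ve}v_\ve+\ve v_\ve=R_\ve-\ve\bar u_\ve=:\tilde R_\ve,
\eeq
whose right-hand side is uniformly bounded in $C^{k,\alpha}(X)$.

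Next I would run the $L^2$ energy estimate. Testing against $v_\ve$ and integrating by parts with respect to the volume form $\rho_\ve^n$ yields
\beq
\int_X|\nabla v_\ve|^2_{\rho_\ve}\,\rho_\ve^n+\ve\int_X v_\ve^2\,\rho_\ve^n=\int_X\tilde R_\ve\,v_\ve\,\rho_\ve^n.
\eeq
Because $C_1^{-1}\omega<\rho_\ve<C_1\omega$ pointwise, both the dual norms $|\nabla v|^2_{\rho_\ve}$ and $|\nabla v|^2_\omega$ and the volume forms $\rho_\ve^n$ and $\omega^n$ are mutually equivalent up to a constant depending only on $C_1$. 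Applying the Poincar\'e inequality for $\omega$ to the mean-zero function $v_\ve$ then gives
\beq
\int_X v_\ve^2\,\rho_\ve^n\le C\int_X|\nabla v_\ve|^2_{\rho_\ve}\,\rho_\ve^n,
\eeq
and combining with Cauchy--Schwarz on the right-hand side and absorbing the resulting term yields a uniform $L^2$ bound $\|u_\ve\|_{L^2}\le C$.

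With the $L^2$ bound in hand, I would treat \eqref{E:laplacian} as $-\Delta_{\rho_\ve}u_\ve=R_\ve-\ve u_\ve$, a uniformly elliptic equation whose right-hand side is uniformly in $L^2\cap L^\infty$. Iterating $W^{2,p}$ estimates and Sobolev embedding (equivalently, Moser--De Giorgi iteration) upgrades this to $\|u_\ve\|_{L^\infty}\le C$. The classical Schauder estimates applied inductively then produce the $C^{k,\alpha}$ bound, provided the coefficients of $\Delta_{\rho_\ve}$ are uniformly bounded in $C^{k-2,\alpha}$; in the applications we have in mind, this regularity of the $\rho_\ve$'s is supplied by Proposition \ref{P:approximation1} applied fibrewise.

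The hard step is the $L^2$ estimate. Without the normalisation hypothesis $|\int_X u_\ve\,\omega^n|<C_3$, integrating the PDE only yields $\ve\int_X u_\ve\,\rho_\ve^n=\int_X R_\ve\,\rho_\ve^n$, so the constant component of $u_\ve$ is allowed to blow up like $1/\ve$ as $\ve\to 0$. The decomposition $u_\ve=\bar u_\ve+v_\ve$ is precisely what removes this singular mode; once this is done, the uniform equivalence of $\rho_\ve$ with $\omega$ allows the Poincar\'e inequality of the fixed background metric $\omega$ to absorb all $\ve$-dependence and to give the key estimate uniformly.
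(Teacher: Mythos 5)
Your proof is correct and follows the same overall scheme as the paper's: an $L^2$ energy estimate via the Poincar\'e inequality, Moser iteration for the uniform bound, and Schauder estimates for the $C^{k,\alpha}$ bound. The one genuine difference is at the start: you split off the $\omega$-average, $u_\ve=\bar u_\ve+v_\ve$, so that $v_\ve$ is exactly mean-zero for $\omega^n$ and the Poincar\'e inequality of the fixed background metric applies verbatim, with the hypothesis $\abs{\int_X u_\ve\,\omega^n}<C_3$ entering only through the bounded constant $\bar u_\ve$. The paper instead applies the Poincar\'e inequality for $\rho_\ve$ directly to $u_\ve$ minus its $\rho_\ve^n$-average and then invokes the hypothesis to bound that average --- but the hypothesis controls the $\omega^n$-average, not the $\rho_\ve^n$-average, and passing from one to the other costs a term of size $\norm{u_\ve}_{L^1}$ that cannot simply be absorbed; your decomposition sidesteps this and is the cleaner way to run the argument. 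One caveat applies to both proofs: uniform equivalence of the $\rho_\ve$ with $\omega$ is only a $C^0$ bound on the coefficients of $\Delta_{\rho_\ve}$, which suffices for the Poincar\'e/Sobolev constants and for Moser iteration but not for the Schauder step when $k\ge1$; you are right to flag that the needed uniform $C^{k,\alpha}$ control of $\rho_\ve$ must be imported from Proposition \ref{P:approximation1} in the applications, whereas the paper leaves this implicit.
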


\begin{proof}
In this proof, we shall use the Schauder estimate, Poincar\'e inequality and Sobolev inequality with respect to the K\"ahler metric $\rho_\ve$ (cf, see \cite{Gilbarg_Trudinger, Aubin2}). 
It is remakable to note that the constants in those inequalities do not depend on $\ve\in I$ since all $\rho_\ve$ are uniformly equivalent to $\omega$.
If we have the uniform estimate, i.e., $L^\infty$-estiamte of $u$, then Schauder estimate completes the proof. 
\medskip

The Poincar\'e inequality says that there exists a constant $C$ which depends only on $C_1$ and the geometry of $(M,\omega)$ such that
\begin{equation*}
\norm{u_\ve-\int_X u_\ve{\rho_\ve}^n}_{L^2_{\rho_\ve}(X)}
<
C\norm{Du_\ve}_{L^2_{\rho_\ve}(X)},
\end{equation*}
where $D$ is a total derivative.
It follows from the assumption that
\begin{equation*}
\norm{u_\ve}_{L^2_{\rho_\ve}(X)}
<C\norm{Du_\ve}_{L^2_{\rho_\ve}(X)}+C_1C_3.
\end{equation*}
On the other hand, multiplying $u_\ve$ to \eqref{E:laplacian} and integrating it with respect to $(\rho_\ve)^n$, we have
\begin{equation*}
\norm{Du_\ve}_{L^2_{\rho_\ve}(X)}^2
+
\ve\norm{u_\ve}_{L^2_{\rho_\ve}(X)}^2
=
\int_X R_\ve u_\ve{\rho_\ve}^n.
\end{equation*}
The H\"older inequality says that
\begin{equation}
\norm{Du_\ve}_{L^2_{\rho_\ve}(X)}^2
\le
\norm{R_\ve}_{L^2_{\rho_\ve}(X)}
\norm{u_\ve}_{L^2_{\rho_\ve}(X)}
\end{equation}
Combining the two equations, there exists a uniform constant $C$ which depends only on $C_1, C_2, C_3$ and the geometry of $(X,\omega)$ such that
\begin{equation*}
\norm{u_\ve}_{L^2_{\rho_\ve}(X)}<C.
\end{equation*}

Now we follow the Moser iteration. 
Multiplying \eqref{E:pde_vpve'} by $\abs{u_\ve}^{2p-1}\cdot u_\ve/\abs{u_\ve}$ and integrating it, we have
\begin{equation*}
\frac{2p-1}{p^2}\int_{X_y}\abs{D\abs{u_\ve}^p}^2\omega^n
+
\ve\int_{X}\abs{u_\ve}^{2p}\omega^n
=
\int_{X} R_\ve u_\ve\omega^n.
\end{equation*}
The Sobolev inequality says that
\begin{equation*}
\norm{\abs{u_\ve}^p}^2_{L^{2n/(n-1)}_{\rho_\ve}(X)}
\le
C
\paren{
	\norm{\abs{u_\ve}^p}_{L^2_{\rho_\ve}(X)}
	+
	\norm{D\abs{u_\ve}^p}_{L^2_{\rho_\ve}(X)}
}
\end{equation*}
for $p\ge1$ (\cite{Aubin2}). Combining two equations, we have
\begin{equation*}
\norm{u_\ve}_{L^{2p\cdot\frac{n}{n-1}}_{\rho_\ve}(X)}
\le
(Cp)^{1/p}\norm{u_\ve}_{L^{2p}_{\rho_\ve}(X)}
\end{equation*}
for $p\ge1$.
The uniform estimate is obtained by the Moser iteration method (cf, see \cite{Gilbarg_Trudinger}). Indeed, set
\begin{equation*}
p_1=1,
\;\;\;
p_k=\paren{\frac{n}{n-1}}^k.
\end{equation*}
Then it follows that
\begin{equation*}
\norm{u}_{L^\infty(X)}
=
\lim_{k\rightarrow\infty}\norm{u_\ve}_{L^{2p_k}_{\rho_\ve}(X)}
\le \prod_{k=1}^n(Cp_k)^{1/p_k}\norm{u_\ve}_{L^2_{\rho_\ve}(X)}.
\end{equation*}
This completes the proof.
\end{proof}

\begin{proposition}\label{P:approximation2}
Suppose that there exist constants $C_1>0$ and $C_2>0$ such that
\begin{equation*}
\abs{\int_{X_y}(V\vp_\ve)(\omega_y)^n}<C_1
\end{equation*}
and
\begin{equation*}
\norm{Vf_\ve}_{C^{k,\alpha}(X_y)}<C_2.
\end{equation*}
Then there exits a constant $C$ which depends only on the constants $C_1$, $C_2$, the lift $V$ and the geometry of $(X_y,\omega_y)$ such that
\begin{equation*}
\norm{V\vp_\ve}_{C^{k,\alpha}(X_y)}<C
\end{equation*}
for $0<\ve\le1$. In particular, $\{V\vp_\ve\}_{0<\ve\le1}$ is a relatively compact subset in $C^{k,\alpha}(X_y)$ for any $k\in\NN$ and $\alpha\in(0,1)$.
\end{proposition}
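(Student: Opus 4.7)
The plan is to derive a second-order linear elliptic equation satisfied by $V\vp_\ve$ on the fiber $X_y$ and then to apply Proposition \ref{P:Key_Prop}. In an admissible coordinate $(z^1,\dots,z^n,s)$ the fiberwise Monge-Amp\`ere equation \eqref{E:ACMAE} reads
\begin{equation*}
\log\det\paren{g_{\alpha\bar\beta}+\vp_{\ve,\alpha\bar\beta}}-\log\det\paren{g_{\alpha\bar\beta}}=\ve\vp_\ve+f_\ve,
\end{equation*}
where $g_{\alpha\bar\beta}$ are the fiber components of $\omega$ in \eqref{E:local_omega}. Setting $\rho_\ve:=\omega_y+dd^c\vp_\ve|_{X_y}$, with components $\rho_{\ve,\alpha\bar\beta}=g_{\alpha\bar\beta}+\vp_{\ve,\alpha\bar\beta}$ and inverse $\rho_\ve^{\bar\beta\alpha}$, and applying the lift $V=\partial/\partial s+a^\gamma\partial/\partial z^\gamma$ to both sides, I obtain
\begin{equation*}
\rho_\ve^{\bar\beta\alpha}V\vp_{\ve,\alpha\bar\beta}-\ve V\vp_\ve=Vf_\ve+\paren{g^{\bar\beta\alpha}-\rho_\ve^{\bar\beta\alpha}}Vg_{\alpha\bar\beta}.
\end{equation*}
Because $V$ does not commute with $\partial_{z^\alpha}$ and $\partial_{z^{\bar\beta}}$, the quantity $V\vp_{\ve,\alpha\bar\beta}-(V\vp_\ve)_{\alpha\bar\beta}$ is a commutator expression built from derivatives of the coefficients $a^\gamma$ contracted against derivatives of $\vp_\ve$ up to order two. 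Moving these commutator terms to the right-hand side yields the desired elliptic equation
\begin{equation*}
-\Delta_{\rho_\ve}(V\vp_\ve)+\ve V\vp_\ve=\mathcal{R}_\ve,
\end{equation*}
where $\Delta_{\rho_\ve}u:=\rho_\ve^{\bar\beta\alpha}u_{\alpha\bar\beta}$ and $\mathcal{R}_\ve$ is assembled from $-Vf_\ve$, the coefficients of $V$, the metric $g_{\alpha\bar\beta}$ and its $V$-derivative, and derivatives of $\vp_\ve$ through order two.

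Next I would verify the two hypotheses of Proposition \ref{P:Key_Prop}. By Proposition \ref{P:approximation1} and the estimate \eqref{E:single_estimate}, the family $\set{\vp_\ve}_{0<\ve\le1}$ is uniformly bounded in $C^{k,\alpha}(X_y)$ for every $k\in\NN$ and $\alpha\in(0,1)$; consequently, the K\"ahler metrics $\rho_\ve$ are uniformly equivalent to $\omega_y$ on $X_y$. Combined with the smoothness of the lift $V$ and the assumed bound $\norm{Vf_\ve}_{C^{k,\alpha}(X_y)}<C_2$, this shows that $\norm{\mathcal{R}_\ve}_{C^{k,\alpha}(X_y)}$ is uniformly bounded in $\ve$. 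The remaining hypothesis $\abs{\int_{X_y}V\vp_\ve\,(\omega_y)^n}<C_1$ is assumed. Proposition \ref{P:Key_Prop} then delivers the uniform estimate $\norm{V\vp_\ve}_{C^{k,\alpha}(X_y)}<C$, after which the asserted relative compactness in $C^{k,\alpha}(X_y)$ follows from the Arzel\`a--Ascoli theorem.

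The principal obstacle lies in the first step: carefully tracking the commutator terms produced by the non-commutativity of $V$ with $\partial_{z^\alpha}\partial_{z^{\bar\beta}}$, and checking that the leading differential operator on $V\vp_\ve$ is genuinely the uniformly elliptic operator $-\Delta_{\rho_\ve}+\ve$, while all other contributions have order at most two in $\vp_\ve$ and are therefore absorbed into a $C^{k,\alpha}$-bounded right-hand side via Proposition \ref{P:approximation1}. Once this bookkeeping is done, Proposition \ref{P:Key_Prop} applies directly to $u_\ve=V\vp_\ve$ and yields the conclusion.
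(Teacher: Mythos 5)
Your proposal is correct and follows essentially the same route as the paper: differentiate the logarithm of the fiberwise Monge--Amp\`ere equation along the lift $V$, isolate the commutator terms $V\vp_{\ve,\alpha\bar\beta}-(V\vp_\ve)_{\alpha\bar\beta}$ (which involve only fiber derivatives of $\vp_\ve$ up to order two and are therefore controlled by Proposition \ref{P:approximation1}), obtain $-\Delta_{\rho_\ve}(V\vp_\ve)+\ve V\vp_\ve=R_\ve$ with $R_\ve$ uniformly bounded in $C^{k,\alpha}(X_y)$, and conclude via Proposition \ref{P:Key_Prop}. This is exactly the paper's argument.
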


\begin{proof}
We denote by $\rho_\ve=\omega+dd^c\vp_\ve$. Note that Proposition \ref{P:approximation1} implies that there exists a uniform constant $C>0$ such that
\begin{equation}\label{E:equivalence}
\frac{1}{C}\omega_y
<
\rho_\ve\vert_{X_y}
<
C\omega_y,
\end{equation}
for $0<\ve\le1$. Under an admissible coordinate $(z^1,\dots,z^n,s)$, the first equation of \eqref{E:ACMAE} is written as follows:
\begin{equation} \label{E:CMAE_coord}
\det(g_{\alpha\bar\beta}+(\varphi_\ve)_{\alpha\bar\beta})
=
e^{\varepsilon\varphi_{\varepsilon}+f_\ve}
\det(g_{\alpha\bar\beta})
\end{equation}
on each $X_y$. Taking logarithm of \eqref{E:CMAE_coord} and differentiating it with respect to $V$, we have
\begin{equation*}
(\rho_\ve)^{\alpha\bar\beta}
V\paren{g_{\alpha\bar\beta}+(\vp_\ve)_{\alpha\bar\beta}}
=
\ve V\vp_\ve
+
V f_\ve
+
g^{\alpha\bar\beta}
V\paren{g_{\alpha\bar\beta}}.
\end{equation*}
For a smooth function $\xi$, we denote by
\begin{align*}
[V,\xi]_{\alpha\bar\beta}
&=
V(\xi_{\alpha\bar\beta})-(V\xi)_{\alpha\bar\beta} \\
&=
-a\ind{s}{\gamma}{\alpha\bar\beta}\xi_\gamma
-a\ind{s}{\gamma}{\alpha}\xi_{\gamma\bar\beta}
-a\ind{s}{\gamma}{\bar\beta}\xi_{\alpha\gamma}.
\end{align*}
It is remarkable to note that $[V,\xi]_{\alpha\bar\beta}$ does not include $s$-derivative of $\xi$.
Then it follows that
\begin{equation}\label{E:pde_vpve}
\begin{aligned}
-\Delta_{\rho_\ve\vert_{X_y}}\paren{V\vp_\ve}
+\ve\paren{V\vp_\ve}
=&
-V f_\ve
-
g^{\alpha\bar\beta} 
V\paren{g_{\alpha\bar\beta}}\\
&+
(\rho_\ve)^{\alpha\bar\beta}
\paren{
V\paren{g_{\alpha\bar\beta}}
+
[V,\vp_\ve]_{\alpha\bar\beta}
}
\end{aligned}
\end{equation}
on each fiber $X_y$, where $\Delta_{\rho_\ve\vert_{X_y}}$ is the Laplace-Beltrami operator on $X_y$ with respect to $\rho_\ve\vert_{X_y}$. Here $(V\vp_\ve)$ and $(Vf_\ve)$ mean that
\begin{equation*}
V\vp_\ve
=
(V\vp_\ve)\vert_{X_y}
\;\;\;\text{and}\;\;\;
V f_\ve
=
(Vf_\ve)\vert_{X_y}.
\end{equation*}
From now on, when we think about a family of PDEs, we omit the subsrcript $X_y$ in the Laplace-Beltrami opertor, i.e., we write as follows:
\begin{equation*}
\Delta_{\rho_\ve}=\Delta_{\rho_\ve\vert_{X_y}}.
\end{equation*}
Equation \eqref{E:pde_vpve} says that the right hand side of \eqref{E:pde_vpve} is a globally defined function on $X_y$, call it $R_\ve$. Then we have
\begin{equation}\label{E:pde_vpve'}
-\Delta_{\rho_\ve}(V\vp_\ve)
+
\ve(V\vp_\ve)
=
R_\ve.
\end{equation}
This is a second order elliptic partial differential equation with the hypotheses in Proposition \ref{P:Key_Prop}. This completes the proof.
\end{proof}

\begin{proposition}\label{P:approximation3}
Under the assumption in Proposition  \ref{P:approximation2}, suppose that there exists a constant $C_3>0$ and $C_4$ such that
\begin{equation*}
\abs{\int_{X_y}\paren{\bar VV\vp_\ve}(\omega_y)^n}<C_3
\end{equation*}
and
\begin{equation*}
\norm{\bar VVf_\ve}_{C^{k,\alpha}(X_y)}<C_4.
\end{equation*}
Then there exits a constant $C$ which depends only on constants $C_1, C_2, C_3, C_4$, the lift $V$ and the geometry of $(X_y,\omega_y)$ such that
\begin{equation*}
\norm{\bar VV\vp_\ve}_{C^{k,\alpha}(X_y)}<C
\end{equation*}
for $0<\ve\le1$. In particular, $\{\bar VV\vp_\ve\}_{0<\ve\le1}$ is a relatively compact subset in $C^{k,\alpha}(X_y)$ for any $k\in\NN$ and $\alpha\in(0,1)$.
\end{proposition}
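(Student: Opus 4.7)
The plan is to mirror, at one derivative higher, the argument used in Proposition \ref{P:approximation2}: derive a second-order linear elliptic PDE for $\bar V V\vp_\ve$ of the form treated in Proposition \ref{P:Key_Prop}, then invoke that proposition. The starting point is the equation \eqref{E:pde_vpve'} already obtained,
\begin{equation*}
-\Delta_{\rho_\ve}(V\vp_\ve)+\ve(V\vp_\ve)=R_\ve,
\end{equation*}
where $R_\ve$ is given explicitly by the right-hand side of \eqref{E:pde_vpve}. The idea is simply to apply $\bar V$ to both sides.

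Applying $\bar V$ and commuting it past the fiberwise Laplacian $\Delta_{\rho_\ve}=(\rho_\ve)^{\alpha\bar\beta}\partial_\alpha\partial_{\bar\beta}$ yields
\begin{equation*}
-\Delta_{\rho_\ve}(\bar V V\vp_\ve)+\ve(\bar V V\vp_\ve)=\bar V R_\ve+[\bar V,\Delta_{\rho_\ve}](V\vp_\ve).
\end{equation*}
The commutator $[\bar V,\Delta_{\rho_\ve}]$ consists, modulo terms of the same shape as $[V,\cdot]_{\alpha\bar\beta}$ introduced in the proof of Proposition \ref{P:approximation2}, of $\bar V$-derivatives of $(\rho_\ve)^{\alpha\bar\beta}$ contracted against second fiberwise derivatives of $V\vp_\ve$; in particular it contains no $s$- or $\bar s$-derivative of $V\vp_\ve$, so no term involving $\bar V V\vp_\ve$ itself appears on the right-hand side. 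Writing out $\bar V R_\ve$ from the explicit expression \eqref{E:pde_vpve} produces $-\bar V V f_\ve$, $\bar V V$-derivatives of the components of $g$, derivatives of the lift coefficients $a\ind{s}{\gamma}{}$, and polynomial expressions in $(\rho_\ve)^{\alpha\bar\beta}$ paired with first fiberwise derivatives of $V\vp_\ve$ and of $\vp_\ve$.

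Next I would check that every term on the right is uniformly bounded in $C^{k,\alpha}(X_y)$ independently of $\ve\in(0,1]$. The hypothesis $\|\bar V Vf_\ve\|_{C^{k,\alpha}(X_y)}<C_4$ handles the inhomogeneous $\bar V V f_\ve$ contribution. Proposition \ref{P:approximation1}, applied at sufficiently high regularity (say $k+2$), gives uniform $C^{k+2,\alpha}$ bounds on $\vp_\ve$, hence uniform $C^{k+1,\alpha}$ bounds on the coefficients of $\rho_\ve$ and their inverse $(\rho_\ve)^{\alpha\bar\beta}$. Proposition \ref{P:approximation2}, applied at regularity $k+1$, gives a uniform $C^{k+1,\alpha}$ bound on $V\vp_\ve$, which in turn bounds all first fiberwise derivatives of $V\vp_\ve$ that appear. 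The coefficients $g_{\alpha\bar\beta},g_{\alpha\bar s},g_{s\bar s}$ from \eqref{E:local_omega} and the lift components $a\ind{s}{\gamma}{}$ are smooth and independent of $\ve$. Assembling these estimates shows that the right-hand side has a uniform $C^{k,\alpha}$ bound $\tilde C$ depending only on $V$, $f$ and the geometry of $(X_y,\omega_y)$.

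Finally I would apply Proposition \ref{P:Key_Prop} with $u_\ve=\bar V V\vp_\ve$, with the family of K\"ahler metrics $\rho_\ve\vert_{X_y}$ (uniformly equivalent to $\omega_y$ thanks to \eqref{E:equivalence}), with the $C^{k,\alpha}$-bounded source constructed above, and with the integral hypothesis $|\int_{X_y}(\bar V V\vp_\ve)(\omega_y)^n|<C_3$ supplied by assumption. This yields the required uniform $C^{k,\alpha}$-bound for $\bar V V\vp_\ve$, and Arzel\`a--Ascoli then gives relative compactness in $C^{k,\alpha}(X_y)$ for each $k$ and $\alpha\in(0,1)$. The main obstacle, as in Proposition \ref{P:approximation2}, is purely organizational: verifying that the commutator $[\bar V,\Delta_{\rho_\ve}]$ and the differentiated expression $\bar V R_\ve$ never produce a term involving $\bar V V\vp_\ve$ itself, so that the equation for $\bar V V\vp_\ve$ really is of the elliptic shape required by Proposition \ref{P:Key_Prop}.
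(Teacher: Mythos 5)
Your proposal is correct and follows essentially the same route as the paper: differentiate the equation $-\Delta_{\rho_\ve}(V\vp_\ve)+\ve(V\vp_\ve)=R_\ve$ with respect to $\bar V$, observe that the resulting right-hand side (the commutator terms $\bar V\paren{(\rho_\ve)^{\bar\beta\alpha}}\cdot\paren{V\vp_\ve}_{\alpha\bar\beta}+(\rho_\ve)^{\bar\beta\alpha}[\bar V,V\vp_\ve]_{\alpha\bar\beta}$ plus $\bar V(R_\ve)$) is uniformly bounded by the already-established bounds on $\vp_\ve$ and $V\vp_\ve$, and then apply Proposition \ref{P:Key_Prop}. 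Your additional care in checking that no $\bar VV\vp_\ve$ term leaks into the right-hand side is exactly the point the paper leaves implicit.
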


\begin{proof}
Differentiating \eqref{E:pde_vpve'} with respect to $\bar V$, we have
\begin{equation*}
-\Delta_{\rho_\ve}\paren{\bar VV\vp_\ve}
+
\ve\paren{\bar VV\vp_\ve}
=
\bar V\paren{(\rho_\ve)^{\bar\beta\alpha}}\cdot\paren{V\vp_\ve}_{\alpha\bar\beta}
+
(\rho_\ve)^{\bar\beta\alpha}[\bar V,V\vp_\ve]_{\alpha\bar\beta}
+
\bar V(R_\ve).
\end{equation*}
Since $\norm{\vp_\ve}_{C^{k,\alpha}(X_y)}$ and $\norm{V\vp_\ve}_{C^{k,\alpha}(X_y)}$ are bounded, the same argument in the proof of Propostion \ref{P:approximation2} says this PDE satisfies the hypotheses in Proposition \ref{P:Key_Prop}. This completes the proof.
\end{proof}

\section{Fiberwise Ricci-flat metrics on Calabi-Yau fibrations}\label{S:fiberwiseRFf}

In this section, we discuss the properties of the fiberwise Ricci-flat metric $\rho$. We first discuss a partial differential equation which the geodesic curvature $c(\rho)$ satisfies and several applications of this PDE.
\medskip

Let $p:X\rightarrow Y$ be a smooth family of Calabi-Yau manfiolds  and $\omega$ be a K\"ahler form  on $X$. We write $\omega$ like as \eqref{E:local_omega}. Since every fiber $X_y$ is a Calabi-Yau manifold, the first Chern class $c_1(X_y)$ vanishes for each fiber $X_y$. Since $c_1(X_y)$ is represented by the Ricci form of $\omega_y$, we know that 
\begin{equation*}
\left[
	-dd^c\log\det(g_{\alpha\bar\beta}(\cdot,y))
\right]
=0.
\end{equation*}
By the $dd^c$-lemma, there exists a unique function $\eta_y\in C^\infty(X_y)$ such that
\begin{itemize}
\item $\displaystyle dd^c\eta_y=dd^c\log\det(g_{\alpha\bar\beta})$ and
\item $\int_{X_y} e^{\eta_y}(\omega_y)^n=\int_{X_y}(\omega_y)^n$.
\end{itemize}
For each $y\in Y$, there exists a unique solution $\vp_y\in C^\infty(X_y)$ of the following complex Monge-Amp\`ere equation on each fiber $X_y$:
\begin{equation}
\begin{aligned}
\paren{\omega_y+dd^c\varphi_y}^n
&=
e^{\eta_y}(\omega_y)^n, \\
\omega_y+dd^c\varphi_y
&>
0,
\end{aligned}
\end{equation}
which is normalized by $\int_{X_y}\vp_ye^{\eta_y}(\omega_y)^n=0$.
Then it is easy to see that $\omega_y+dd^c\varphi_y$ is the Ricci-flat K\"ahler metric on $X_y$. As we already mentioned, we can consider $\vp$ as a smooth function on $X$ by letting $\vp(x)=\vp_y(x)$ where $y=p(x)$. Define a real $(1,1)$-form $\rho$ on $X$ by
\begin{equation*}
\rho=\omega+dd^c\vp.
\end{equation*}
Since $e^{\eta_y}(\omega_y)^n=(\omega_{KE,y})^n$, this is the fiberwise Ricci-flat metric in Theorem \ref{T:main_theorem}. 
\medskip

\medskip

The following theorem is proved in \cite{Braun:Choi:Schumacher}. 
(Cf, see Theorem \ref{T:PDE}.)

\begin{theorem}\label{T:PDE0}
Let $V\in T_yY$. Then the following PDE holds on $X_y$:
\begin{equation}\label{E:PDE0}
-\Delta_\rho c(\rho)(V)
=
\abs{\bar\partial V_\rho}_\rho^2
-
\Theta_\rho(V_\rho,\bar V_\rho).
\end{equation}
\end{theorem}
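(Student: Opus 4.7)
The plan is to exploit the identity $\Theta_{h^\rho_{X/Y}}(K_{X/Y}) = p^*\Theta(E)$ just established. It identifies $\Theta_{V\bar V}(E)$ with the $(V,\bar V)$-component of the relative canonical bundle curvature for the fiberwise Ricci-flat metric $\rho$, so the theorem reduces to the Schumacher-type identity
\begin{equation*}
-\Delta_\rho c(\rho)(V) + \Theta_{h^\rho_{X/Y}}(K_{X/Y})(V, \bar V)\vert_{X_y} = \abs{\bar\partial V_\rho}_\rho^2
\end{equation*}
on $X_y$. This is a purely local identity for any closed real $(1,1)$-form on $X$ whose restriction to each fiber is Ricci-flat.

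I would verify it by direct calculation in admissible coordinates $(z^1,\dots,z^n, s)$, reducing to $\dim Y = 1$ without loss of generality (the general case follows by restricting to a holomorphic curve through $y$ tangent to $V$). By the Jacobi formula,
\begin{equation*}
\Theta_{h^\rho_{X/Y}}(K_{X/Y})(V,\bar V) = \rho^{\bar\beta\alpha}(\rho_{\alpha\bar\beta})_{s\bar s} - \rho^{\bar\beta\gamma}(\rho_{\gamma\bar\delta})_{\bar s}\rho^{\bar\delta\alpha}(\rho_{\alpha\bar\beta})_s,
\end{equation*}
while $\Delta_\rho c(\rho)(V) = \rho^{\bar\beta\alpha}(c(\rho)(V))_{\alpha\bar\beta}$ with $c(\rho)(V) = \rho_{s\bar s} - \rho_{s\bar\beta}\rho^{\bar\beta\alpha}\rho_{\alpha\bar s}$. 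The closedness $d\rho = 0$ supplies the symmetries $(\rho_{\alpha\bar\beta})_s = (\rho_{s\bar\beta})_\alpha$ and $(\rho_{\alpha\bar\beta})_{\bar s} = (\rho_{\alpha\bar s})_{\bar\beta}$, which allow me to exchange base- and fiber-derivatives throughout the Leibniz expansion of $\Delta_\rho c(\rho)(V)$.

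The fiber Ricci-flatness enters at a single step. Since $dd^c\log\det\rho_{\alpha\bar\beta}\vert_{X_y}=0$, the term in the Leibniz expansion which, in Schumacher's canonically polarized setting, would produce a Ricci-curvature factor (and thereby make the operator $-\Delta_\rho + 1$ coercive) drops out entirely. The remaining quadratic terms, once I write $V_\rho^\alpha = -\rho_{s\bar\gamma}\rho^{\bar\gamma\alpha}$ and use $\partial_{\bar\beta}\rho^{\bar\delta\alpha} = -\rho^{\bar\delta\epsilon}(\rho_{\epsilon\bar\gamma})_{\bar\beta}\rho^{\bar\gamma\alpha}$, regroup exactly into
\begin{equation*}
\abs{\bar\partial V_\rho}_\rho^2 = \rho^{\bar\beta\delta}\rho_{\alpha\bar\gamma}(\partial_{\bar\beta}V_\rho^\alpha)\overline{(\partial_{\bar\delta}V_\rho^\gamma)}.
\end{equation*}

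The main obstacle is bookkeeping: the raw Leibniz expansion produces many cross terms, and matching them with $\abs{\bar\partial V_\rho}_\rho^2$ demands careful use of both the closedness symmetries and the differentiation rule for $\rho^{\bar\beta\alpha}$. I would organize the calculation by separating at each step the terms where $\partial_s$ or $\partial_{\bar s}$ hits $\rho^{\bar\beta\alpha}$ from those hitting $\rho_{s\bar\beta}$ or $\rho_{\alpha\bar s}$, so that the Ricci-flatness hypothesis is invoked at a single transparent step and the final matching with $\abs{\bar\partial V_\rho}_\rho^2$ becomes immediate.
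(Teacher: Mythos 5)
Your proposal is correct and follows essentially the same route as the paper: the paper defers the proof of Theorem \ref{T:PDE0} to the coordinate computation of Theorem \ref{T:PDE} (carried out for the $\ve$-perturbed metrics $\rho_\ve$, where the extra terms $\ve c(\rho_\ve)$ and $\ve\omega(v_{\rho_\ve},\overline{v_{\rho_\ve}})$ appear) and then specializes via the identity $\Theta_{h^\rho_{X/Y}}(K_{X/Y})=\Theta(E)$ exactly as in Corollary \ref{C:PDE}, which is the computation you describe at $\ve=0$ directly. The ingredients you list --- closedness symmetries to exchange base and fiber derivatives, the Jacobi formula for $\partial_s\partial_{\bar s}\log\det(\rho_{\alpha\bar\beta})$, and the vanishing of the Ricci/coercivity terms (which in the paper is the statement $\bar\partial^*\bar\partial v_\rho=0$ from Lemma \ref{L:harmonic} at $\ve=0$) --- are precisely those used there.
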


In \cite{Braun:Choi:Schumacher}, it is proved that the curvature form $\Theta_\rho$ of a family of compact K\"ahler manifolds with vanishing first Chern class  satisfies that on a admissible coordinate $(z^1,\dots,z^n,s^1,\dots,s^d)$ we have
\begin{equation*}
\Theta_{\alpha\bar\beta}=0,
\;\;
\Theta_{s\bar\beta}=0,
\;\;
\Theta_{\alpha\bar j}=0,
\;\;\text{and}\;\;
\Theta_{i\bar j}(z,s)=\Theta_{i\bar j}(s).
\end{equation*}
In particular, $\Theta_\rho(V_\rho,\bar V_\rho)$ is a constant on each fiber $X_y$.
However, in case of a family of compact K\"ahler manifold with trivial line bundle, the curvature $\Theta_\rho$ coincides with the curvature of $E=p_*(K_{X/y})$ as follows:
\medskip

Since every fiber $X_y$ is Calabi-Yau, $K_{X_y}$ is a trivial line bundle for every $y\in Y$.
Hence the direct image bundle $E=p_*(K_{X/Y})$ is a line bundle over $Y$.
Take an admissible coordinate system $(z^1,\dots,z^n,s^1,\dots,s^d)$ in $X$.
Let $u$ be a local holomorphic section of $E$ over an open set $U\subset Y$.
(Shrinking $U$ if necessary, $s=(s^1,\dots,s^d)$ can be considered as a local coordinate in $U$.) Since $E$ is a line bundle, the curvature of $(E,\norm\cdot)$ is given by
\begin{equation*}
\Theta(E)=-dd^c\log\norm{u}_s.
\end{equation*}
We say that $\mathbf{u}$ is a representative of $u$ if $\mathbf{u}$ is an $(n,0)$-form on $p^{-1}(U)$, such that $\mathbf{u}$ restricts to $u_s$ on fibers $X_s$, i.e.,
\begin{equation*}
\iota_s^*(\mathbf{u}) = u_s
\end{equation*}
where $\iota_s$ is the natural inclusion map from $X_s$ to $X$ (for more details, see \cite{Berndtsson1, Berndtsson2}).
The representative is not uniquely determined, but any two representatives are differ from $ds\wedge v$ for some $(n-1,0)$-form $v$.
Hence if we denote by $u\wedge\overline u\wedge dV_s:=\mathbf{u}\wedge\overline{\mathbf{u}}\wedge dV_s$, where $dV_s=c_d ds\wedge d\bar s$, then it does not depend on the choice of the representative. Moreover, it also follows that
\begin{equation*}
\displaystyle{\norm{u}^2_s=c_n\int_{X_s}u\wedge\overline u=c_n\int_{X_s}\mathbf{u}\wedge\overline{\mathbf{u}}}
\end{equation*}
for any representative $\mathbf{u}$ of $u$. In terms of $u$, the function $\eta$ is written explicitly:

\begin{proposition} \label{P:key_p}
On $p^{-1}(U)$, $\eta$ is written as follows:
\begin{equation}\label{E:eta}
\eta(z,s)
=
-\log\frac{\omega^n\wedge dV_s}
	{c_n u\wedge \overline{u}\wedge dV_s}
-\log\frac{\norm{u}^2_s}{\vol(X_s)}.
\end{equation}
In particular, we have the following:
\begin{equation*}
\Theta_{h^\rho_{X/Y}}(K_{X/Y})
=
p^*\Theta(E).
\end{equation*}
\end{proposition}

\begin{proof}
Let $\mathbf{u}$ be a representative of $u$. Denote the right hand side of \eqref{E:eta} by $\tilde\eta$.
It is enough to show the following:
\begin{itemize}
\item [1.]$\int_{X_s}e^{\tilde\eta}(\omega_s)^n=\int_{X_s}(\omega_s)^n$.
\item [2.]$dd^c\tilde\eta\vert_{X_s}=-dd^c\log\det(g_{\alpha\bar\beta})\vert_{X_s}.$
\end{itemize}
First we compute
\begin{align*}
\int_{X_s}e^{\tilde\eta}(\omega_s)^n
&=
\int_{X_s}\left[\exp\paren{-\log\frac{\omega^n\wedge dV_s}
	{c_n \mathbf{u}\wedge \overline{\mathbf{u}}\wedge dV_s}
-\log\frac{\norm{u}^2_s}{\vol(X_s)}}\right](\omega_s)^n.
\end{align*}
If we write $dz=dz^1\wedge\dots\wedge dz^n$, then 
\begin{equation*}
(\omega_s)^n=\det(g_{\alpha\bar\beta})c_n dz\wedge d\bar z
\;\;\;\text{and}\;\;\;
\mathbf{u}\vert_{X_s}=\hat u(z,s)dz
\end{equation*}
for some local holomorphic function $\hat u(z,s)$. 
It follows that
\begin{align*}
\int_{X_s}e^{\tilde\eta}(\omega_s)^n
&=
\int_{X_s}\exp
\paren{-\log\frac{\det(g_{\alpha\bar\beta})}{c_n\abs{\hat u(z,s)}^2}
-
\log\frac{\norm{u}^2_s}{\vol(X_s)}
}
(\omega_s)^n \\
&=
\frac{\vol(X_s)}{\norm{u}_s^2}
\cdot c_n\int_{X_s}
\frac{\abs{\hat u(z,s)}^2}{\det(g_{\alpha\bar\beta})}
{\det(g_{\alpha\bar\beta})}dz\wedge d\bar z \\
&=
\frac{\vol(X_s)}{\norm{u}_s^2}
\cdot c_n\int_{X_s}
\hat u(z,s)dz\wedge \overline{\hat u(z,s)dz} \\
&=
\frac{\vol(X_s)}{\norm{u}_s^2}
\cdot c_n\int_{X_s}
\mathbf{u}\wedge\overline{\mathbf{u}}
=
\int_{X_s}(\omega_s)^n.
\end{align*}
Moreover, we have
\begin{align*}
dd^c\tilde\eta\vert_{X_s}
&=
-dd^c
\paren{\log\det(g_{\alpha\bar\beta})
	+\log\abs{\hat u(z,s)}^2
}\Big\vert_{X_s} 
\\
&=
-dd^c\log\det(g_{\alpha\bar\beta})\vert_{X_s}.
\end{align*}
This yields the first assertion. For the second assertion,
\begin{align*}
\Theta_\rho
&=
dd^c\log \rho^n\wedge dV_s
=
dd^c\log e^\eta\omega^n\wedge dV_s\\
&=
dd^c\log c_n u\wedge \overline{u}\wedge dV_s
-
dd^c\log\norm{u}^2_s
=
p^*\Theta(E).
\end{align*}
This completes the proof.
\end{proof}

\begin{remark}\label{R:sufficient_condition}
To show that $p_*\rho^{n+1}$ is positive on $Y$, it is enough to consider a Calabi-Yau fibration over the unit disc by the following: 
\begin{itemize}
\item [1.] Let $\sigma_1$ and $\sigma_2$ be real $(1,1)$-forms on $Y$. Suppose that
\begin{equation*}
p_*(\sigma_1\vert_{\gamma(\DD)})^{n+1}
\ge
p_*(\sigma_2\vert_{\gamma(\DD)})^{n+1}
\end{equation*}
for each holomorphic disc $\gamma^{n+1}:\DD\rightarrow Y$.
Then we have
$p_*(\sigma_1)^{n+1}\ge p_*(\sigma_2)^{n+1}$ on $X$.
\item [2.] Every computation concerning the positivity of $p_*\rho^{n+1}$ is local in $s$-variable, which is a local coordinate in $Y$.
\end{itemize}
Therefore we only consider a famliy of Calabi-Yau manifolds over the unit disc in $\CC$ as long as we are interested in positivity properties of $p_*\rho^{n+1}$. In this case, \eqref{E:PDE0} turns out to be
\begin{equation}\label{E:PDE0'}
-\Delta_\rho c(\rho)
=
\abs{\bar\partial v_\rho}_\rho^2-\Theta_{s\bar s},
\end{equation}
where $v=\partial/\partial s$ and $\Theta_{s\bar s}:=\Theta_\rho(v_\rho,\bar v_\rho)$. As we mentioned in Section \ref{SS:horizontal_lift}, the positivity of $p_*\rho^{n+1}$ is equivalent to $\int_{X_y}c(\rho)\rho^n>0$. 
\end{remark}

\begin{remark}
In case of a family of canonically polarized compact complex manifolds $p:X\rightarrow\DD$, Schumacher have proved that the geodesic curvature $c(\tilde\rho)$ of the form $\tilde\rho$, which is induced by the fiberwise K\"ahler-Einstein metrics of Ricci curvature $-1$, satisfies the following PDE:
\begin{equation}\label{E:Schumacher}
-\Delta_\rho c(\tilde\rho)+c(\tilde\rho)
=
\abs{\bar\partial v_{\tilde\rho}}_{\tilde\rho}^2
\end{equation}
for each fiber $X_y$ (\cite{Schumacher}). This PDE gives a lower bound of $c(\tilde\rho)$ directly by the maximum principle. (Moreover, a lower bound is also obtained using heat kernel estimates.) Hence the fiberwise K\"ahler-Einstein form $\tilde\rho$ is a semi-positive metric on $X$. However \eqref{E:PDE0'} does not gives a lower bound by the maximum principle. 
\end{remark}

In the last of this section, we discuss some applications of Theorem \ref{T:PDE0}.

\begin{proposition}\label{P:harmonic_representative}
$\bar\partial V_\rho\cdot u_y$ is the harmonic representative of the cohomology class $K_y(V)\cdot u_y$ with respect to $\rho\vert_{X_y}$.
\end{proposition}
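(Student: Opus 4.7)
The plan is to recognize $\bar\partial V_\rho\cdot u_y$ as the harmonic representative via a norm comparison. First note that $\bar\partial V_\rho$ is a $T'X_y$-valued $(0,1)$-form on $X_y$ representing the Kodaira--Spencer class $K_y(V)$, so $\bar\partial V_\rho\cdot u_y$ is a $\bar\partial$-closed $(n-1,1)$-form representing $K_y(V)\cdot u_y\in H^{n-1,1}(X_y)$. Since Hodge theory characterizes the harmonic representative as the unique element of minimal $L^2$-norm in its cohomology class, it suffices to show that $\bar\partial V_\rho\cdot u_y$ attains this minimum.

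The key input is a pointwise identity on $X_y$:
\begin{equation*}
|\bar\partial V_\rho\cdot u_y|^2_\rho=|u_y|^2_\rho\cdot|\bar\partial V_\rho|^2_\rho.
\end{equation*}
This reflects the fact that contraction with $u_y$, i.e.\ the map $w\mapsto i_w u_y$, gives a pointwise isomorphism $T'X_y\to\Omega^{n-1,0}X_y$ which scales norms squared by the factor $|u_y|^2_\rho$; a routine computation extends this scaling to $T'X_y$-valued $(0,1)$-forms. Because $\rho|_{X_y}$ is Ricci-flat, the induced Chern metric on the trivial line bundle $K_{X_y}$ has vanishing curvature, so the holomorphic section $u_y$ is parallel and $|u_y|^2_\rho$ is constant on $X_y$; under the normalization $\vol(X_y)=1$, this constant equals $\inner{u_y,u_y}_y$.

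Integrating the pointwise identity and invoking Proposition \ref{P:norm_dbarv} (which reads $\int_{X_y}|\bar\partial V_\rho|^2_\rho(\omega_{KE,y})^n=\Theta_{V\bar V}(E)$) then yields
\begin{equation*}
\|\bar\partial V_\rho\cdot u_y\|^2_{L^2}=\inner{u_y,u_y}_y\cdot\Theta_{V\bar V}(E).
\end{equation*}
By the Griffiths formula \eqref{E:Griffiths} the right-hand side equals $\|K_y(V)\cdot u_y\|^2$, the squared norm of the harmonic representative. Since $\bar\partial V_\rho\cdot u_y$ represents $K_y(V)\cdot u_y$ and achieves the minimal $L^2$-norm in its class, it must coincide with the harmonic representative.

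The main technical obstacle is justifying the pointwise norm identity, which rests on the Ricci-flatness of $\rho|_{X_y}$ (to guarantee that $|u_y|^2_\rho$ is pointwise constant) together with the Calabi--Yau isomorphism $T'X_y\otimes K_{X_y}\simeq\Omega^{n-1,0}X_y$; once this is granted, Proposition \ref{P:norm_dbarv} and the Griffiths formula close the argument without further work.
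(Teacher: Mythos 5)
Your proof is correct and follows essentially the same route as the paper: combine the minimality of the harmonic representative's $L^2$-norm with Griffiths' formula \eqref{E:Griffiths} and Proposition \ref{P:norm_dbarv} to force equality in the chain $\norm{\bar\partial V_\rho}_\rho^2=\Theta_{V\bar V}(E)=\norm{K_y(V)\cdot u_y}^2/\norm{u_y}^2\le\norm{\bar\partial V_\rho\cdot u_y}^2/\norm{u_y}^2$. The only difference is that you derive the pointwise isometry of $w\mapsto i_w u_y$ (via Ricci-flatness forcing $\abs{u_y}^2_\rho$ to be constant) by hand, whereas the paper cites this as a known lemma from \cite{Popovici}; your justification is sound, though the constancy of $\abs{u_y}^2_\rho$ is more directly seen from $dd^c\log\abs{u_y}^2_\rho=-\mathrm{Ric}(\rho\vert_{X_y})=0$ and the maximum principle than from parallelism of $u_y$.
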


\begin{proof}
Since $E$ is a line bundle, Griffiths' theorem implies that
\begin{equation*}
\Theta_{V\bar V}(E)
=
\frac{\norm{K_y(V)\cdot u_y}^2}{\norm{u_y}^2}.
\end{equation*}
Note that
\begin{equation*}
\bar\partial V_\rho
\in
K_y(V).
\end{equation*}
It follows that
\begin{equation*}
\frac{\norm{K_y(V)\cdot u_y}^2}{\norm{u_y}^2}
\le
\frac{\norm{\bar\partial V_\rho\cdot u_y}^2}{\norm{u_y}^2}.
\end{equation*}
The following lemma is well-
known (cf, see \cite{Popovici}).
\begin{lemma}\label{L:nonvanishingform}
Let $(X,\omega)$ be a Calabi-Yau manifold. Let $u$ be a non-vanishing holomorphic $n$-form on $X$ such that 
\begin{equation*}
\norm{u}^2_\omega
:=\int_X \abs{u}^2_\omega\;dV_\omega
=\int_X dV_\omega.
\end{equation*}
Denote by $A^{(p,q)}(E)$ the space of smooth $(p,q)$-forms with values in $E$. Define a map 
$$
T_u:A^{(0,1)}(T'X)\rightarrow A^{(n-1,1)}(X)
$$
by $T_u(V)=V\cdot u$.
Then $T_u$ is an isometry with respect to the pointwise scalar product induced by $\omega$.
\end{lemma}
Hence Lemma  implies that
\begin{equation*}
\norm{\bar\partial V_\rho}_\rho^2
=
\Theta_{V\bar V}(E)
=
\frac{\norm{K_y(V)\cdot u_y}^2}{\norm{u_y}^2}
\le
\frac{\norm{\bar\partial V_\rho\cdot u_y}^2}{\norm{u_y}^2}
=
\norm{\bar\partial V_\rho}_\rho^2.
\end{equation*}
It follows that $\bar\partial V_\rho\cdot u_y$ is the harmonic representative with respect to $\rho\vert_{X_y}$ of $K_y(V)\cdot u_y$. This completes the proof.
\end{proof}

\begin{proposition}
Let $p:X\rightarrow Y$ be a Calabi-Yau fibration. If the curvature $\Theta_{h^\rho_{X/Y}}(K_{X/Y})$ vanishes along a complex curve, then the fibration is trivial along the complex curve.
\end{proposition}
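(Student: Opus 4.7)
The plan is to transfer the vanishing of $\Theta(E)$ into vanishing of the Kodaira--Spencer class along the curve, via the line bundle version of Griffiths' formula together with the Calabi--Yau isometry $T_u$, and then invoke the classical local triviality theorem of Kodaira and Spencer.

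First I would restrict to the complex curve in question, which I will denote $C \subset Y$, by considering the pullback family $p^{-1}(C) \to C$. Observe that because every fiber $X_y$ is Calabi--Yau, the canonical bundle $K_{X_y}$ is trivial and hence $E = p_*(K_{X/Y})$ is a \emph{line} bundle over $Y$. Over an open set $U \subset C$, pick a nowhere vanishing local holomorphic section $u$ of $E|_U$ with local representative $\mathbf{u}$, so that $u_y = \iota_y^*(\mathbf{u})$ generates $H^0(X_y, K_{X_y})$. Since $E$ has rank one, the Griffiths formula \eqref{E:Griffiths} simplifies to
\begin{equation*}
\Theta_{V\bar V}(E)\cdot \|u_y\|_y^2 = \|K_y(V)\cdot u_y\|^2
\end{equation*}
for every $V \in T'_yC$, where the right-hand side is the squared norm of the harmonic representative of $K_y(V)\cdot u_y \in H^{n-1,1}(X_y)$. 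The hypothesis says the left-hand side vanishes for every $V \in T'_yC$ and every $y \in C$, so the harmonic representative of $K_y(V)\cdot u_y$ is zero and thus $K_y(V)\cdot u_y = 0$ in $H^{n-1,1}(X_y)$.

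Next I would use the Calabi--Yau property to upgrade this to vanishing of $K_y(V)$ itself. The contraction map $T_{u_y} : A^{(0,1)}(T'X_y) \to A^{(n-1,1)}(X_y)$ defined by $T_{u_y}(\zeta \otimes w) = \zeta \wedge i_w(u_y)$ is, by the lemma invoked in the proof of Proposition \ref{P:harmonic_representative}, a pointwise isometry with respect to the scalar product induced by $\omega_{KE,y}$ (here we crucially use that $u_y$ is nowhere vanishing, so it has an explicit pointwise inverse). An isometric morphism of smooth forms commutes with the $\bar\partial$-Laplacian, and hence induces an isomorphism on Dolbeault cohomology $H^{0,1}(X_y, T'X_y) \xrightarrow{\sim} H^{n-1,1}(X_y)$. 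Consequently $K_y(V) = 0$ in $H^{0,1}(X_y, T'X_y)$ for every $V \in T'_yC$ and every $y \in C$.

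Finally, the classical theorem of Kodaira and Spencer (cited in the excerpt right after the definition of the Kodaira--Spencer class) states that if the Kodaira--Spencer class of a smooth family vanishes identically on a base, then the family is locally trivial. Applying this to the pullback family over $C$ concludes the proof. I do not expect any serious obstacle: the heart of the argument is the combination of the rank-one Griffiths formula with the Calabi--Yau contraction isomorphism, and the only mildly subtle point is justifying that the pointwise isometry $T_{u_y}$ descends to an isomorphism on cohomology (which follows because an isometry commutes with the formal adjoint $\bar\partial^*$, hence with the Laplacian, and therefore preserves harmonic forms).
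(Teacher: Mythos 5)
Your argument is correct, but it follows a genuinely different route from the paper's. The paper stays inside its own PDE machinery: integrating the fiberwise identity $-\Delta_\rho c(\rho)=\abs{\bar\partial v_\rho}_\rho^2-\Theta_{s\bar s}(E)$ over a fiber (Proposition \ref{P:norm_dbarv}) gives $\int_{X_y}\abs{\bar\partial v_\rho}_\rho^2\,dV_\rho=\Theta_{s\bar s}(E)$, so the hypothesis forces $\bar\partial v_\rho\equiv 0$ on $X_\gamma$; the horizontal lift $v_\rho$ is then a holomorphic vector field projecting onto $\partial/\partial s$, and its flow \emph{constructs} the trivialization directly, with no appeal to deformation theory. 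You instead use the rank-one Griffiths formula to kill the class $K_y(V)\cdot u_y$, the Calabi--Yau contraction to kill $K_y(V)$ itself, and then quote the Kodaira--Spencer local triviality theorem. Both work, and in fact your first two steps are essentially the content of Proposition \ref{P:harmonic_representative} read backwards. What the paper's route buys is constructiveness and independence from the Kodaira--Spencer rigidity theorem, whose classical form assumes $\dim H^1(X_t,T'X_t)$ is constant in $t$ --- automatic here because the contraction identifies it with the Hodge number $h^{n-1,1}$, which is constant in a smooth K\"ahler family, but this is a hypothesis you should acknowledge if you invoke that theorem. What your route buys is that it bypasses the fiberwise Ricci-flat metric and the geodesic-curvature PDE entirely: only Griffiths' formula and the triviality of $K_{X_y}$ are used. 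One point to tighten: the reason $T_{u_y}$ descends to an isomorphism on cohomology is not that it is an isometry per se (an arbitrary bundle isometry does not commute with $\bar\partial$) but that $u_y$ is holomorphic, so $T_{u_y}$ is induced by the sheaf isomorphism $T'X_y\cong\Omega^{n-1}_{X_y}$ and the isomorphism $H^1(X_y,T'X_y)\cong H^{n-1,1}(X_y)$ is immediate; the pointwise isometry is only needed if you insist on arguing through harmonic representatives.
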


\begin{proof}
Denote by $\gamma$ the complex curve in $Y$. Then $p\vert_\gamma:X_\gamma\rightarrow\gamma$ is a Calabi-Yau fibration over a $1$-dimensional base. If we take $s$ be a holomorphic coordinate of $\gamma$, then we have Equation \eqref{E:PDE0'} on each fiber $X_y$ for $y\in\gamma$. By the Hypothesis, $\Theta_\rho(v_\rho,\ov{v_\rho})$ vanishes on $\gamma$. Integrating \eqref{T:PDE0}, we know that $v_\rho$ is a holomorphic vector field on $X_\gamma$. The flow of $v_\rho$ makes $X_\gamma$ a trivial fibration.
\end{proof}

\section{Proof of Theorem \ref{T:main_theorem}}
\label{S:proof}

In this section we shall prove the main theorem. As we mentioned in Remark \ref{R:sufficient_condition}, it is enough to show that $\int_{X/\DD}c(\rho)\rho^n>0$ for a family of Calabi-Yau manifolds over the unit disc in $\CC$.

Let $p:X\rightarrow\DD$ be a smooth family of Calabi-Yau manifolds. For each $\ve>0$, we consider the following fiberwise complex Monge-Amp\`ere equation on each fiber $X_y$:
\begin{equation}\label{E:PDE1'}
\begin{aligned}
\paren{\omega_y+dd^c\vp_y}^n
&=
e^{\varepsilon\vp_y}e^{\eta_y}(\omega_y)^n
\;\;\text{and}
\\
\omega_y+dd^c\vp_y
&>0,
\end{aligned}
\end{equation}
where $\eta$ is defined in Section \ref{S:fiberwiseRFf}.
Theorem \ref{T:AY} implies that there exists a unique solution $\vp_{y,\ve}\in C^\infty(X_y)$ of \eqref{E:PDE1'}.
As we mentioned, we can consider $\vp_\ve$ as a smooth function on $X$ by letting $\vp_\ve(x):=\vp_{y,\ve}(x)$, where $y=p(x)$.
We consider next the $(1,1)$-form
\begin{equation} \label{E:rho}
\rho_\varepsilon
	:=\omega+dd^c\varphi_\varepsilon
\end{equation}
on the manifold $X$. Since $\rho_\varepsilon$ is positive definite when restricted to $X_y$, it induces a hermitian metric $h^{\rho_\varepsilon}_{X/Y}$ on the bundle $K_{X/Y}\vert_{X_0}$.
The curvature $\Theta_{h^{\rho_\varepsilon}_{X/Y}}(K_{X/Y})$ is computed as follows:
\begin{align*}
\Theta_{h^{\rho_\varepsilon}_{X/Y}}(K_{X/Y})
& =
dd^c\log
\paren{
	(\rho_\ve)^n\wedge\im ds\wedge d\bar s
}
 =
dd^c\log
\paren{
	e^{\varepsilon\varphi_\epsilon}
	e^\eta\omega^n\wedge\im ds\wedge d\bar s} \\
& =
\ve dd^c\varphi_\ve
+
\Theta_{h^\rho_{X/Y}}(K_{X/Y}) 
\end{align*}
From \eqref{E:rho}, we have $dd^c\varphi_\varepsilon=\rho_\varepsilon-\omega$,
it follows that
\begin{equation}\label{E:Ricci}
\Theta_{\rho_\varepsilon}
=
\varepsilon{\rho_\varepsilon}
-
\varepsilon\omega
+
\Theta_\rho
\end{equation}
in another expression,
\begin{equation*}
\ve{\rho_\varepsilon}
=
\ve\omega
+
\Theta_{\rho_\varepsilon}
-
\Theta_\rho.
\end{equation*}
Our next claim is the geodesic curvature $c(\rho_\ve)$ satisfies a certain elliptic partial differential equation of second order on each fiber $X_y$.
\medskip

Under an admissible coordinate $(z^1,\dots,z^n,s)\in X$, $\rho_\ve$ is written as follows:
\begin{equation*}
\rho_\ve
=
\im\paren{(h_\ve)_{s\bar s}ds\wedge d\bar s
+(h_\ve)_{s\bar\beta}ds\wedge{dz}^{\bar\beta}
+(h_\ve)_{\alpha\bar s}dz^\alpha\wedge d\bar s
+(h_\ve)_{\alpha\bar\beta}dz^\alpha\wedge{dz}^{\bar\beta}
}.
\end{equation*}
For each $y\in\DD$, $(h_\ve)_{\alpha\bar\beta}(\cdot,y)$ gives a K\"{a}hler metric on $X_y$. (If there is no confusion, we simply write $(h_\ve)_{\alpha\bar\beta}$.) Thus we can define contraction and covariant derivative on each $X_y$ with respect to $(h_\ve)_{\alpha\bar\beta}$. We use raising and lowering of indices as well as the semi-colon for the contractions and the covariant derivatives with respect to the K\"{a}hler metric $(h_\ve)_{\alpha\bar\beta}$, respectively, on the fiber $X_y$.  We denote by $\Delta_{\rho_\ve}=\Delta_{\rho_\ve\vert_{X_y}}$ the Laplace-Beltrami operator with negative eigenvalues on the fiber $X_y$ with respect to $\rho_\ve\vert_{X_y}$.

By raising of indices, we can write the horizontal lift $v_{\rho_\varepsilon}$ of $v=\partial/\partial s$ with respect to $\rho_\varepsilon$ by
\begin{equation*}
v_{\rho_\varepsilon}
=
\pd{}{s}
-(h_\ve)_{s\bar\beta}(h_\ve)^{\bar\beta\alpha}\pd{}{z^\alpha}
=
\pd{}{s}
-(h_\ve)\ind{s}{\alpha}{}\pd{}{z^\alpha}.
\end{equation*}
Then Remark \ref{R:horizontal_lift} says that the geodesic curvature $c(\rho_\ve):X\rightarrow\RR$ is given by
\begin{align*}
c(\rho_\ve)(z,s)
=
\inner{v_{\rho_\varepsilon},v_{\rho_\varepsilon}}_{\rho_\varepsilon}
=
(h_\ve)_{s\bar{s}}
-
(h_\ve)_{s\bar\beta}(h_\ve)^{\bar\beta\alpha}(h_\ve)_{\alpha\bar{s}}.
\end{align*}
It is remarkable to note that $\bar\partial{v_{\rho_\varepsilon}}$ is a representative of the Kodaira-Spencer class which is is a $T'X_y$-valued $(0,1)$-form which is defined by
\begin{align*}
\bar\partial{v_{\rho_\varepsilon}}
=
\bar\partial\paren{\pd{}{s}
-(h_\ve)_{s}^{\phantom{i}\alpha}\pd{}{z^\alpha}} 
=
-\pd{(h_\ve)\ind{s}{\alpha}{}}{z^{\bar\beta}}dz^{\bar\beta}\otimes\pd{}{z^\alpha}
=
-(h_\ve)\ind{s}{\alpha}{;\bar\beta}dz^{\bar\beta}\otimes\pd{}{z^\alpha}.
\end{align*}

The following theorem is inspired by Schumacher in \cite{Schumacher}. P\v aun generalized the computation to the twisted K\"ahler-Einstein metric case (\cite{Paun2}). (See also \cite{Choi1}.)

\begin{theorem}\label{T:PDE}
The following partial differential equation holds on each fiber $X_y$:
\begin{equation*}
-\Delta_{\rho_\ve} c(\rho_\varepsilon)
+
\varepsilon c(\rho_\varepsilon)
=
\varepsilon\omega(v_{\rho_\ve},\overline{v_{\rho_\ve}})
+
\abs{\bar\partial v_{\rho_\ve}}_{\rho_\ve}^2
-
\Theta_{s\bar s},
\end{equation*}
where $\abs{\bar\partial v_{\rho_\ve}}_{\rho_\ve}$ is the pointwise norm of $\bar\partial v_{\rho_\ve}$ with respect to the K\"ahler metric $\rho_\ve\vert_{X_y}$.
\end{theorem}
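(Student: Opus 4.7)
The plan is to derive the stated PDE by evaluating the curvature of the relative canonical bundle, equipped with the metric $h^{\rho_\varepsilon}_{X/Y}$ induced by $\rho_\varepsilon$, in two different ways on the horizontal pair $(v_{\rho_\varepsilon}, \overline{v_{\rho_\varepsilon}})$, and then matching the resulting scalar identities.

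First, I would use the curvature formula
\begin{equation*}
\Theta_{h^{\rho_\varepsilon}_{X/Y}}(K_{X/Y}) = \varepsilon\rho_\varepsilon - \varepsilon\omega + \Theta(E),
\end{equation*}
which has already been derived just before the theorem statement by taking $dd^c\log$ of the twisted fiberwise Monge-Amp\`ere equation \eqref{E:PDE1'}. Pairing both sides with $(v_{\rho_\varepsilon},\overline{v_{\rho_\varepsilon}})$ gives
\begin{equation*}
\Theta_{h^{\rho_\varepsilon}_{X/Y}}(K_{X/Y})(v_{\rho_\varepsilon},\overline{v_{\rho_\varepsilon}}) = \varepsilon c(\rho_\varepsilon) - \varepsilon\omega(v_{\rho_\varepsilon},\overline{v_{\rho_\varepsilon}}) + \Theta_{s\bar s}(E),
\end{equation*}
where I use $\rho_\varepsilon(v_{\rho_\varepsilon},\overline{v_{\rho_\varepsilon}}) = c(\rho_\varepsilon)$ by the definition of geodesic curvature, and the fact that $\Theta(E)$ is the pullback of a $(1,1)$-form on the base (so the horizontal lift sees only its $ds\wedge d\bar s$ component).

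Second, I would establish the Schumacher-type local identity
\begin{equation*}
\Theta_{h^{\rho_\varepsilon}_{X/Y}}(K_{X/Y})(v_{\rho_\varepsilon},\overline{v_{\rho_\varepsilon}}) = \Delta_{\rho_\varepsilon}c(\rho_\varepsilon) + \abs{\bar\partial v_{\rho_\varepsilon}}_{\rho_\varepsilon}^2
\end{equation*}
on every fiber $X_y$. This is a purely local computation: set $\psi := \log\det((h_\varepsilon)_{\alpha\bar\beta})$, so that $\Theta_{h^{\rho_\varepsilon}_{X/Y}}(K_{X/Y}) = dd^c\psi$, and expand the contraction of $dd^c\psi$ with the explicit expression for $v_{\rho_\varepsilon}$ from Remark \ref{R:horizontal_lift}. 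Differentiating $\psi$ through Jacobi's formula $\psi_X = (h_\varepsilon)^{\bar\beta\alpha}\partial_X(h_\varepsilon)_{\alpha\bar\beta}$ and using the fiberwise K\"ahler symmetry $(h_\varepsilon)_{\alpha\bar\beta,\gamma} = (h_\varepsilon)_{\gamma\bar\beta,\alpha}$, one identifies the pure second-derivative part as the fiber trace $(h_\varepsilon)^{\bar\beta\alpha}c(\rho_\varepsilon)_{;\alpha\bar\beta} = \Delta_{\rho_\varepsilon}c(\rho_\varepsilon)$, while the surviving quadratic terms in first derivatives of $(h_\varepsilon)_{s\bar\beta}$ and $(h_\varepsilon)_{\alpha\bar s}$ assemble into the pointwise norm $\abs{\bar\partial v_{\rho_\varepsilon}}_{\rho_\varepsilon}^2$ of the $T'X_y$-valued $(0,1)$-form $\bar\partial v_{\rho_\varepsilon}$ displayed in the paragraph preceding the theorem.

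Combining the two expressions for $\Theta_{h^{\rho_\varepsilon}_{X/Y}}(K_{X/Y})(v_{\rho_\varepsilon},\overline{v_{\rho_\varepsilon}})$ and rearranging yields the claimed PDE. The main obstacle is the bookkeeping in the second step: the identity is essentially algebraic, but the index manipulations require care to ensure that every $s$- or $\bar s$-derivative of the metric coefficients appearing in the expansion of $\psi_{s\bar s}$ and its horizontal corrections is absorbed either into $c(\rho_\varepsilon)_{;\alpha\bar\beta}$ or into the $\bar\partial v_{\rho_\varepsilon}$ norm, with no leftover residual terms. This is essentially the coordinate computation of Schumacher \cite{Schumacher} and P\v{a}un \cite{Paun2} adapted to the present setting; the new ingredient is that the curvature formula now carries the twist $\varepsilon\rho_\varepsilon - \varepsilon\omega + \Theta(E)$ rather than a single multiple of $\rho_\varepsilon$, which is precisely what produces the $\varepsilon c(\rho_\varepsilon)$, $\varepsilon\omega(v_{\rho_\varepsilon},\overline{v_{\rho_\varepsilon}})$, and $\Theta_{s\bar s}(E)$ contributions in the final PDE.
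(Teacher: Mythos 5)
Your proposal is correct and is essentially the paper's own argument in a different packaging: the paper proves exactly your second-step identity by computing $\Delta_{\rho_\ve}c(\rho_\ve)$ in coordinates (via the commutation formula, Lemma \ref{L:harmonic} and the curvature relation \eqref{E:Ricci}), and the terms you extract by pairing $\ve\rho_\ve-\ve\omega+\Theta(E)$ with $(v_{\rho_\ve},\overline{v_{\rho_\ve}})$ are precisely those the paper assembles from $I_1$, $I_4$ and the $\frac{\partial^2}{\partial s\partial\bar s}\log\det((h_\ve)_{\alpha\bar\beta})$ contribution. Your factorization into the universal identity $\Theta_{h^{\rho_\ve}_{X/Y}}(K_{X/Y})(v_{\rho_\ve},\overline{v_{\rho_\ve}})=\Delta_{\rho_\ve}c(\rho_\ve)+\abs{\bar\partial v_{\rho_\ve}}_{\rho_\ve}^2$ plus the curvature equation evaluated on the horizontal direction is a cleaner way of organizing the same computation (and is implicitly acknowledged in Remark \ref{R:PDE}), but all the substantive work still lies in verifying that identity, which is exactly the coordinate calculation the paper carries out.
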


\begin{proof}
We fix a fiber $X_y$ and $\ve>0$. During this proof, if there is no confusion, we omit the subscript $\ve$ in the components in $\rho_\ve$ for simplicity, namely, we write as follows:
\begin{equation*}
h_{s\bar s}=(h_\ve)_{s\bar s},
\quad
h_{s\bar\beta}=(h_\ve)_{s\bar\beta}
\quad\text{and}\quad
h_{\alpha\bar\beta}=(h_\ve)_{\alpha\bar\beta}.
\end{equation*}

We have to compute the following:
\begin{equation*}
\Delta_{\rho_\ve} c(\rho_\ve)
	=h^{\bar\delta\gamma}(c(\rho_\ve))_{;\gamma\bar\delta}
	=h^{\bar\delta\gamma}\paren{
	h_{s\bar{s}}-h_{s\bar\beta}h^{\bar\beta\alpha}h_{\alpha\bar{s}}
	}_{;\gamma\bar\delta}.
\end{equation*}
First we consider the term $h^{\bar\delta\gamma}h_{s\bar{s};\gamma\bar\delta}$.
Since $\rho_\ve$ is locally $\ddbar$-exact, we have
\begin{align*}
h_{s\bar{s};\gamma\bar\delta}
	& =\pd{^2h_{s\bar{s}}}{z^\gamma\partial{z}^{\bar\delta}}
	= \pd{^2}{s\partial\bar{s}}h_{\gamma\bar\delta}.
\end{align*}
Then it follows that
\begin{align*}
h^{\bar\delta\gamma}h_{s\bar{s};\gamma\bar\delta}
	& = h^{\bar\delta\gamma}\pd{^2}{s\partial\bar{s}}h_{\gamma\bar\delta} 
	= \pd{}{s}\paren{h^{\bar\delta\gamma}\pd{}{\bar{s}}h_{\gamma\bar\delta}}
	- \pd{h^{\bar\delta\gamma}}{s}\pd{h_{\gamma\bar\delta}}{\bar{s}}
	\\
	& =\pd{^2}{s\partial\bar{s}}\log{\det(h_{\alpha\bar\beta})}
	+h^{\bar\delta\alpha}\pd{h_{\alpha\bar\beta}}{s}
	h^{\bar\beta\gamma}\pd{h_{\gamma\bar\delta}}{\bar{s}}
\end{align*}
By \eqref{E:Ricci}, we have
\begin{equation*}
\pd{^2}{s\partial\bar{s}}\log{\det(h_{\alpha\bar\beta})}
=
\ve\rho_\ve\paren{\pd{}{s},\pd{}{\bar s}}
-
\ve\omega\paren{\pd{}{s},\pd{}{\bar s}}
+
\Theta_{s\bar s}.
\end{equation*}
Hence it follows that
\begin{equation}\label{E:first_term}
h^{\bar\delta\gamma}h_{s\bar{s};\gamma\bar\delta}
=
\ve
\paren{
	h_{s\bar s}
	-
	g_{s\bar s}
}
+
\Theta_{s\bar s}
+
h_{s\bar\beta;\alpha}
h_{\bar{s}\gamma;\bar\delta}
h^{\bar\beta\gamma}h^{\bar\delta\alpha}.
\end{equation}

Next we consider the term
$h^{\bar\delta\gamma}\paren{h_{s\bar\beta}h^{\bar\beta\alpha}h_{\alpha\bar{s}}}_{;\gamma\bar\delta}$, which can be written by
$$
h^{\bar\delta\gamma}\paren{h\ind{s}{\alpha}{} h_{\alpha\bar{s}}}_{;\gamma\bar\delta}.
$$
Define a tensor $\{A\ind{s}{\alpha}{\bar\beta}\}$ by
$$
A\ind{s}{\alpha}{\bar\beta}=-h\ind{s}{\alpha}{;\bar\beta}.
$$
Then it follows that
\begin{equation*}
\bar\partial{v_\rho}=A\ind{s}{\alpha}{\bar\beta}\pd{}{z^\alpha}\otimes{dz}^{\bar\beta}.
\end{equation*}
Hence we have
\begin{align*}
h^{\bar\delta\gamma}\paren{h\ind{s}{\sigma}{}h_{\bar{s}\delta}}_{;\gamma\bar\delta}
	& = h^{\bar\delta\gamma}\paren{
	h\ind{s}{\sigma}{;\gamma\bar\delta}h_{\bar{s}\sigma}
	+A\ind{s}{\sigma}{\bar\delta}A_{\bar{s}\sigma\gamma}
	+h\ind{s}{\sigma}{;\bar\delta}h_{\bar{s}\sigma;\bar\delta}
	+h\ind{s}{\sigma}{}A_{\bar{s}\sigma\gamma;\bar\delta}
	}
	\\
	& := I_1+I_2+I_3+I_4.
\end{align*}
First of all, it is obvious that
\begin{align*}
I_2
=
A\ind{s}{\sigma}{\bar\delta}A_{\bar{s}\sigma\gamma}h^{\bar\delta\gamma}
=
\abs{\bar\partial{v_{\rho_\ve}}}_{\rho_\ve}^2.
\end{align*}
And the term $I_3$ is equal to $h_{s\bar\beta;\alpha}h_{\bar{s}\gamma;\bar\delta}h^{\bar\beta\gamma}h^{\bar\delta\alpha}$, which is appeared in \eqref{E:first_term}. So these terms are cancelled in the last computation.

Before computing $I_1$ and $I_4$, we introduce some ingredients. Let $R\ind{}{\delta}{\alpha\bar\beta\gamma}$ be a Riemann curvature tensor of $\rho_\ve\vert_{X_y}$. Then by the commutation formula for covariants derivatives, we have
\begin{equation} \label{E:commutation}
T\ind{}{\alpha}{;\bar\beta\gamma}
	-T\ind{}{\alpha}{;\gamma\bar\beta}
	=R\ind{}{\alpha}{\delta\bar\beta\gamma}T^\delta.
\end{equation}
Let $R_{\alpha\bar\beta}:=R\ind{}{\gamma}{\alpha\bar\beta\gamma}$ be the Ricci tensor of $\rho_\ve\vert_{X_y}$. By the definition of $h^{\rho_\ve}_{X/Y}$ in Remark 2.3, we have
\begin{equation*}
\Theta_{h^{\rho_\ve}_{X/Y}}\vert_{X_y}=-\mathrm{Ric}(\rho_\ve\vert_{X_y}).
\end{equation*}
Hence it follows from \eqref{E:Ricci} that
\begin{equation*}
R_{\alpha\bar\beta}
=
\ve h_{\alpha\bar\beta}
-
\ve g_{\alpha\bar\beta}.
\end{equation*}

\begin{lemma} \label{L:harmonic}
Let $\bar\partial^*_{\rho_\ve}$ be the adjoint of $\bar\partial$ with respect to the $L^2$-inner product with  $\rho_\varepsilon\vert_{X_y}$, which is defined by
\begin{equation*}
\bar\partial^*\paren{A\ind{s}{\alpha}{\bar\beta}
\pd{}{z^\alpha}\otimes{dz}^{\bar\beta}}
	:=h^{\bar\beta\gamma}A\ind{s}{\alpha}{\bar\beta;\gamma}\pd{}{z^\alpha}
\end{equation*}
Then we have the following:
\begin{equation}\label{E:dbarstar}
\bar\partial^*
\paren{
\bar\partial v_{\rho^\varepsilon}
}
=
\varepsilon
\paren{
	g_{s\bar\delta}h^{\bar\delta\alpha}
	-h_{s\bar\delta}g^{\bar\delta\alpha}
}
\pd{}{z^\alpha}.
\end{equation}
In particular, we have
\begin{equation*}
h^{\bar\beta\gamma}A\ind{s}{\alpha}{\bar\beta;\gamma}
=
\varepsilon
\paren{
	g_{s\bar\delta}h^{\bar\delta\alpha}
	-h_{s\bar\delta}g^{\bar\delta\alpha}
}.
\end{equation*}
\end{lemma}

\begin{proof}
Since a K\"{a}hler metric is torsion-free, we have
\begin{align*}
h^{\bar\beta\gamma}A\ind{s}{\alpha}{\bar\beta;\gamma}
	= -h^{\bar\beta\gamma}h^{\bar\delta\alpha}h_{s\bar\delta;\bar\beta\gamma}
	= -h^{\bar\beta\gamma}h^{\bar\delta\alpha}h_{s\bar\beta;\bar\delta\gamma}.
\end{align*}
By \eqref{E:Ricci} and \eqref{E:commutation}, it follows that
\begin{align*}
h^{\bar\beta\gamma}A\ind{s}{\alpha}{\bar\beta;\gamma}
&=
	 -h^{\bar\beta\gamma}h^{\bar\delta\alpha}
	\bparen{h_{s\bar\beta;\gamma\bar\delta}
	-h_{s\bar\tau}R\ind{}{\bar\tau}{\bar\beta\bar\delta\gamma}
	}
=
	-h^{\bar\delta\alpha}
	\bparen{
	\paren{h^{\bar\beta\gamma}\pd{h_{\bar\beta\gamma}}{s}}_{;\bar\delta}
	-h_{s\bar\tau}h^{\bar\beta\gamma}R\ind{}{\bar\tau}{\bar\beta\bar\delta\gamma}
	}
	\\
&=
	-h^{\bar\delta\alpha}
	\bparen{
	\paren{\pd{}{s}\log\det(h_{\alpha\bar\beta})}_{;\bar\delta}
	+h_{s\bar\tau}R\ind{}{\bar\tau}{\bar\delta}
	}
=
	-h^{\bar\delta\alpha}
	\bparen{
	(\Theta_{h^{\rho_\ve}_{X/Y}})_{s\bar\delta}
	+h_{s\bar\tau}h^{\bar\tau\gamma}R_{\gamma\bar\delta}
	}
	\\
&=
	-h^{\bar\delta\alpha}
	\bparen{
	(\Theta_{h^{\rho_\ve}_{X/Y}})_{s\bar\delta}
	-h_{s\bar\tau}h^{\bar\tau\gamma}(\Theta_{h^{\rho_\ve}_{X/Y}})_{\gamma\bar\delta}
	}
	\\
&=
	-\varepsilon
	h^{\bar\delta\alpha}
	\bparen{
	h_{s\bar\delta}-g_{s\bar\delta}
	-h_{s\bar\tau}h^{\bar\tau\gamma}
	\paren{h_{\gamma\bar\delta}-g_{\gamma\bar\delta}}
	}
=
	\varepsilon
	\paren{
		g_{s\bar\delta}h^{\bar\delta\alpha}
		-h_{s\bar\delta}g^{\bar\delta\alpha}
	}
\end{align*}
This completes the proof.
\end{proof}

Next we compute the term $I_1$:
\begin{align*}
I_1
&=
	h_{\bar{s}\sigma}h\ind{s}{\sigma}{;\gamma\bar\delta}h^{\bar\delta\gamma}
=
	h_{\bar{s}\sigma}
	\paren{
		-A\ind{s}{\sigma}{\bar\delta;\gamma}h^{\bar\delta\gamma}
		+h\ind{s}{\lambda}{}R\ind{}{\sigma}{\lambda\gamma\bar\delta}
		h^{\bar\delta\gamma}
	}
	\\
&=
	h_{\bar{s}\sigma}
	\bparen{
		-\varepsilon
		\paren{
			g_{s\bar\delta}h^{\bar\delta\sigma}
			-h_{s\bar\delta}g^{\bar\delta\sigma}
		}
		-h_{s\bar\lambda}R^{\sigma\bar\lambda}
	}
	\\
&=
	h_{\bar{s}\sigma}
	\bparen{
		-\varepsilon
		\paren{
			g_{s\bar\delta}h^{\bar\delta\sigma}
			-h_{s\bar\delta}g^{\bar\delta\sigma}
		}
		+h_{s\bar\lambda}\varepsilon
		\paren{h^{\sigma\bar\lambda}-g^{\sigma\bar\lambda}
		}
	}
	\\
&=
	\varepsilon\paren{
	h_{s\bar\beta}h^{\bar\beta\alpha}h_{\alpha\bar s}
	-
	g_{s\bar\beta}h^{\bar\beta\alpha}h_{\alpha\bar s}
	}.
\end{align*}
Finally we compute the term $I_4$:
\begin{align*}
I_4
&=
	h^{\gamma\bar\delta}h\ind{s}{\sigma}{}A_{\bar{s}\sigma\gamma;\bar\delta}
	=
	h_{s\bar\sigma}
	h^{\gamma\bar\delta}A\ind{\bar{s}}{\bar\sigma}{\gamma;\bar\delta}
	\\
&=
	h_{s\bar\sigma}
	\varepsilon\paren{
		g_{\bar s\delta}h^{\delta\bar\sigma}
		-h_{\bar s\delta}g^{\delta\bar\sigma}
	}
	=
	\varepsilon\paren{
	h_{s\bar\beta}h^{\bar\beta\alpha}g_{\alpha\bar s}
	-
	h_{s\bar\beta}g^{\bar\beta\alpha}h_{\alpha\bar s}
	}.
\end{align*}
Together with all computations, it follows that
\begin{align*}
\Delta_{\rho_\ve} c(\rho_\ve)
&=
\ve(h_{s\bar s}-g_{s\bar s})
+
\Theta_{s\bar s}
-
\abs{\bar\partial{v_{\rho_\ve}}}_{\rho_\ve}^2
	-\ve
	\paren{
	h_{s\bar\beta}h^{\bar\beta\alpha}h_{\alpha\bar s}
	-
	g_{s\bar\beta}h^{\bar\beta\alpha}h_{\alpha\bar s}
	}
	\\
&\;\;\;-
	\ve
	\paren{
	h_{s\bar\beta}h^{\bar\beta\alpha}g_{\alpha\bar s}
	-
	h_{s\bar\beta}g^{\bar\beta\alpha}h_{\alpha\bar s}
	}
	\\
&=
	\Theta_{s\bar s}
	-
	\abs{\bar\partial{v_{\rho_\ve}}}_{\rho_\ve}^2
	+
	\ve	
	\paren{
		h_{s\bar s}
		-h_{s\bar\beta}h^{\bar\beta\alpha}h_{\alpha\bar s}
	}
	\\
&\;\;\;+	
	\varepsilon\paren{
		g_{s\bar s}
		-g_{s\bar\beta}h^{\bar\beta\alpha}h_{\alpha\bar s}
		-h_{s\bar\beta}h^{\bar\beta\alpha}g_{\alpha\bar s}
		+h_{s\bar\beta}g^{\bar\beta\alpha}h_{\alpha\bar s}
	}.
\end{align*}
Since
\begin{equation*}
\omega(v_{\rho_\ve},\overline{v_{\rho_\ve}})
=
g_{s\bar s}-g_{s\bar\beta}h^{\bar\beta\alpha}h_{\alpha\bar s}
	-h_{s\bar\beta}h^{\bar\beta\alpha}g_{\alpha\bar s}
	+h_{s\bar\beta}g^{\bar\beta\alpha}h_{\alpha\bar s},
\end{equation*}
it follows that
\begin{equation*}
-\Delta_{\rho_\ve} c(\rho_\varepsilon)
+\ve c(\rho_\varepsilon)
=
\ve\omega(v_{\rho_\ve},\overline{v_{\rho_\ve}})
+
\abs{\bar\partial{v_{\rho_\ve}}}_{\rho_\ve}^2
-
\Theta_{s\bar s}.
\end{equation*}
Therefore, we have the conclusion.
\end{proof}

\begin{corollary}\label{C:PDE}
Let $\rho$ be the fiberwise Ricci-flat metric in Theorem \ref{T:main_theorem}. Then the following PDE holds on each fiber $X_y$:
\begin{equation*}
-\Delta_\rho c(\rho)=\abs{\bar\partial v_\rho}_\rho^2-\Theta_{s\bar s}.
\end{equation*}
\end{corollary}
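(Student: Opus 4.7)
The plan is to derive the identity as the $\varepsilon\to 0$ limit of the PDE established in Theorem \ref{T:PDE}, which reads
\[
-\Delta_{\rho_\varepsilon} c(\rho_\varepsilon) + \varepsilon\, c(\rho_\varepsilon) = \varepsilon\,\omega(v_{\rho_\varepsilon},\overline{v_{\rho_\varepsilon}}) + \abs{\bar\partial v_{\rho_\varepsilon}}_{\rho_\varepsilon}^2 - \Theta_{s\bar s}(E).
\]
The two terms carrying an explicit factor $\varepsilon$ should vanish in the limit, leaving precisely the claimed PDE for $c(\rho)$.

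First I would feed the approximation machinery of Section \ref{S:approximation} with the fixed choice $f_\varepsilon\equiv\eta$, where $\eta$ is the function constructed in Section \ref{S:fiberwiseRFf}. Proposition \ref{P:approximation1} together with Corollary \ref{C:convergence_vp} then delivers uniform $C^{k,\alpha}(X_y)$-bounds and the convergence $\varphi_\varepsilon\to\varphi$ on each fiber. Because $\eta$ is independent of $\varepsilon$, the $C^{k,\alpha}$-bounds on $V\eta$ and $\bar V V\eta$ required by Propositions \ref{P:approximation2} and \ref{P:approximation3} are automatic on any relatively compact subset of the base. The remaining integral hypotheses---uniform bounds on $\int_{X_y}(V\varphi_\varepsilon)(\omega_y)^n$ and $\int_{X_y}(\bar V V\varphi_\varepsilon)(\omega_y)^n$---I would obtain by differentiating along $V$ and $\bar V V$ the cohomological identity $\int_{X_y}e^{\varepsilon\varphi_\varepsilon+\eta}(\omega_y)^n=\int_{X_y}(\omega_y)^n$ via the commutation of Lie derivatives with fiber integration established in Subsection \ref{SS:AFCMAE}, combined with the uniform $C^0$-bound on $\varphi_\varepsilon$. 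Propositions \ref{P:approximation2} and \ref{P:approximation3} then yield uniform $C^{k,\alpha}$-bounds for $V\varphi_\varepsilon$ and $\bar V V\varphi_\varepsilon$, hence fiberwise $C^{k,\alpha}$-convergence of the $s$-derivatives of $\varphi_\varepsilon$ that enter $c(\rho_\varepsilon)$ and $\bar\partial v_{\rho_\varepsilon}$.

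With these convergences in hand, each building block of $\rho_\varepsilon$---the components $(h_\varepsilon)_{\alpha\bar\beta}$, $(h_\varepsilon)_{s\bar\beta}$, $(h_\varepsilon)_{s\bar s}$, and the inverse matrix $(h_\varepsilon)^{\bar\beta\alpha}$ (uniformly controlled through the equivalence \eqref{E:equivalence})---converges in $C^{k,\alpha}$ to its counterpart for $\rho$. It follows that $c(\rho_\varepsilon)\to c(\rho)$ and $\bar\partial v_{\rho_\varepsilon}\to\bar\partial v_\rho$ on each fiber, so that $\abs{\bar\partial v_{\rho_\varepsilon}}^2_{\rho_\varepsilon}\to\abs{\bar\partial v_\rho}^2_\rho$ and $\Delta_{\rho_\varepsilon}c(\rho_\varepsilon)\to\Delta_\rho c(\rho)$, while the two $\varepsilon$-weighted terms vanish in the limit thanks to the uniform bounds on $c(\rho_\varepsilon)$ and on the horizontal lift $v_{\rho_\varepsilon}$. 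Passing to the limit in the PDE of Theorem \ref{T:PDE} then produces the asserted identity.

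The main obstacle I anticipate is precisely the verification of the integral hypotheses of Propositions \ref{P:approximation2} and \ref{P:approximation3} uniformly in $\varepsilon$; once those inputs are secured, the remainder is a routine passage to the limit in a family of second-order linear elliptic equations. A subtlety worth underlining is that $V\varphi_\varepsilon$ and $\bar V V\varphi_\varepsilon$ themselves satisfy not Monge--Amp\`ere equations but the linearized PDEs \eqref{E:pde_vpve'} and its $\bar V$-differentiation, so the uniform fiberwise estimates are supplied by the linear apparatus of Proposition \ref{P:Key_Prop} (Moser iteration and Schauder) rather than by nonlinear Monge--Amp\`ere theory.
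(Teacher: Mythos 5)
Your proposal is correct, but it follows the second of the two routes that the paper itself sketches for this corollary, and not the one the paper actually leans on. The paper's primary argument is a direct computation: since the fiberwise Ricci--flat metric satisfies $\Theta_{h^\rho_{X/Y}}(K_{X/Y})=\Theta(E)$ exactly (the $\varepsilon=0$ analogue of \eqref{E:Ricci}), one simply reruns the computation of Theorem \ref{T:PDE} with $\varepsilon=0$; every $\varepsilon$-weighted term disappears at the source and no approximation is needed. Your route --- passing to the limit $\varepsilon\to0$ in the PDE of Theorem \ref{T:PDE} --- is the alternative the paper mentions in one sentence and whose technical substance is deferred to Section \ref{S:app_geo_curv}: there the author verifies exactly the integral hypotheses you flag as the main obstacle, by differentiating the normalization $\int_{X_s}e^{\varepsilon\varphi_\varepsilon+\eta}\omega^n=1$ in $s$ and $\bar s$ (Lemma \ref{L:initial1} and its successor), and then applies Proposition \ref{P:Key_Prop} to the linearized equations for $v_\rho\varphi_\varepsilon$ and $\overline{v_\rho}v_\rho\varphi_\varepsilon$, just as you propose. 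One point you should make explicit: uniform $C^{k,\alpha}$-bounds give only subsequential compactness of $V\varphi_\varepsilon$ and $\bar V V\varphi_\varepsilon$, so you must still identify the limits as $V\varphi$ and $\bar V V\varphi$; the paper does this by showing the limit satisfies the limiting linear PDE together with the normalization $\int_{X_s}(v_\rho\varphi)\rho^n=0$, whose solution is unique (Proposition \ref{P:Lie_derivative}). With that identification added, your argument is complete; the trade-off is that the direct computation is a few lines, whereas your route requires the entire convergence apparatus --- which, to be fair, the paper needs anyway for Proposition \ref{P:conv_geo_curv} and the proof of the main theorem.
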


\begin{proof}
If we apply the same computation with the proof of Theorem \ref{T:PDE} to $\rho$ using the above equation, then we have the conclusion.

On the other hand, it is also an easy consequence of the convergence of the form $\rho_\ve$ to $\rho$ as $\ve\rightarrow0$ by passing through a subsequence for each $y\in Y$.
(More precisely, the function $\vp_\ve$ converges to $\vp$ as $\ve\rightarrow0$.) This will be proved in the next section.
\end{proof}

\begin{remark}\label{R:PDE}
The computations in Corollary \ref{C:PDE} do not use the normalization condition of $\vp$.
Hence it is easy to see that for any $d$-closed smooth real $(1,1)$-form $\tau$ whose restriction on each fiber is the Ricci-flat metric we have
\begin{equation*}
-\Delta_\tau c(\tau)
=
\abs{\bar\partial v_\tau}_\tau^2-\Theta_{s\bar s}.
\end{equation*}
\end{remark}

Now we are at the position of proving the positivity of the direct image $p_*\rho^{n+1}$.
As we mentioned in Subsection \ref{SS:horizontal_lift}, it is enough to show that the fiber integral $\int_{X_s}c(\rho)\rho^n$ is positive. 
It follows from Theorem \ref{T:PDE} that
\begin{align*}
\int_{X/\DD}
c(\rho_\ve)\rho_\ve^n
&=
\int_{X_s}
\frac{1}{\ve}
\bparen{
	\Delta_{\rho_\ve}c(\rho_\ve)
	+
	\abs{\bar\partial v_{\rho_\ve}}^2_{\rho_\ve}
	-
	\Theta_{s\bar s}
	+\ve\omega(v_{\rho_\ve},\bar v_{\rho_\ve})
}
\rho_\ve^n
\\
&=
\frac{1}{\ve}
\bparen{	
	\norm{\bar\partial v_{\rho_\ve}}^2_{L^2_{\rho_\ve}(X_s)}
	-
	\Theta_{s\bar s}
}
+
\int_{X_s}
\omega(v_{\rho_\ve},\bar v_{\rho_\ve})
\rho_\ve^n.
\end{align*}
Let $u$ be a non-vanishing $(n,0)$ form on $X_s$ satisfying the condition in Lemma \ref{L:nonvanishingform}. 
Then Proposition \ref{P:harmonic_representative} says that $\bar\partial v_\rho\cdot u$ is the harmonic representative of $K_s\cdot u$.
Hence Theorem \ref{T:Griffiths} implies that
\begin{align*}
\norm{\bar\partial v_{\rho_\ve}}_{\rho_\ve}^2
=
\norm{\bar\partial v_{\rho_\ve}\cdot u}_{\rho_\ve}^2
\ge
\norm{K_s\cdot u}^2
=
\norm{\bar\partial v_\rho\cdot u}_\rho^2
=
\norm{\bar\partial v_\rho}_\rho^2
=
\Theta_{s\bar s}.
\end{align*}
We already know that on each fiber $X_s$, the $\rho_\ve\vert_{X_s}$ converges to $\rho\vert_{X_s}$ by Corollary \ref{C:convergence_vp}.
Therefore, Proposition \ref{P:conv_geo_curv}, which will be proved in the next section, says that
\begin{equation}\label{E:lower_bound}
\int_{X/\DD}
c(\rho)\rho^n
\ge
\int_{X_s}
\omega(v_{\rho},\bar v_{\rho})
\rho^n.
\end{equation}
In particular, $p_*\rho^{n+1}$ is positive.

\begin{proposition} \label{P:conv_geo_curv}
On each fiber $X_y$, we have
\begin{equation*}
c(\rho_{\ve})\rightarrow c(\rho)
\;\;\;\text{and}\;\;\;
\bar\partial v_{\rho_{\ve}}
\rightarrow
\bar\partial v_{\rho}
\;\;\;\text{as}\;\;\;
\ve\rightarrow\infty.
\end{equation*}
\end{proposition}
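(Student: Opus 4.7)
The strategy is to apply the uniform $C^{k,\alpha}$-estimates from Section \ref{S:approximation} to extract a convergent subsequence of $\vp_\ve$ together with its first and second derivatives in the $s$-direction, and then observe that both $c(\rho_\ve)$ and $\bar\partial v_{\rho_\ve}$ are continuous (in fact rational) expressions in those derivatives. Fix the fiber $X_y$ and fix a smooth lift $V$ of $v=\partial/\partial s$ over a neighborhood of $y$ (for instance, the $\omega$-horizontal lift). Since in our setting $f_\ve=\eta$ does not depend on $\ve$, the functions $V\eta$ and $\bar VV\eta$ are globally smooth on $X$, so their $C^{k,\alpha}(X_y)$-norms are automatically bounded uniformly in $\ve$.

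The first step is to invoke Proposition \ref{P:approximation1} for the uniform $C^{k,\alpha}(X_y)$-bound on $\vp_\ve$, and then Propositions \ref{P:approximation2} and \ref{P:approximation3} for the corresponding uniform bounds on $V\vp_\ve$ and $\bar VV\vp_\ve$. To apply the latter two, one must verify the remaining hypotheses, namely the uniform boundedness in $\ve$ of the fiber integrals
\[
\int_{X_y}(V\vp_\ve)(\omega_y)^n \quad\text{and}\quad \int_{X_y}(\bar VV\vp_\ve)(\omega_y)^n.
\]
This bound can be extracted by differentiating the smoothly varying function $s\mapsto\int_{X_s}\vp_\ve\,\omega^n$ via the fiber integration formula $\partial_s\int_{X_s}=\int_{X_s}L_V$ recalled in Subsection \ref{SS:AFCMAE}, and combining the result with the uniform $C^0$-bound on $\vp_\ve$ already obtained. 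I expect this step to be the main technical obstacle.

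With these estimates in hand, Arzel\`a--Ascoli yields a subsequence $\ve_j\to 0$ along which $(\vp_{\ve_j},V\vp_{\ve_j},\bar VV\vp_{\ve_j})$ converges in $C^{k-1,\alpha}(X_y)$ to some triple $(\psi_0,\psi_1,\psi_2)$. Corollary \ref{C:convergence_vp} identifies $\psi_0=\vp\vert_{X_y}$. To identify $\psi_1$ and $\psi_2$, pass to the limit $\ve\to 0$ in the PDE \eqref{E:pde_vpve'} and in its $\bar V$-differentiated version: the limits $\psi_1$ and $\psi_2$ solve the same Laplace equations (with respect to $\rho\vert_{X_y}$) as $V\vp\vert_{X_y}$ and $\bar VV\vp\vert_{X_y}$, and the constant ambiguity coming from the kernel of $\Delta_\rho$ is pinned down by the matching of fiber integrals obtained in the previous step.

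Finally, in an admissible coordinate the geodesic curvature $c(\rho_\ve)=h_{s\bar s}-h_{s\bar\beta}h^{\bar\beta\alpha}h_{\alpha\bar s}$ and the $T'X_y$-valued $(0,1)$-form $\bar\partial v_{\rho_\ve}$ are rational expressions in $\vp_\ve$ and its first and second derivatives (including those involving $s$), with denominator $\det(h_{\alpha\bar\beta})$ bounded away from zero uniformly in $\ve$ by Proposition \ref{P:approximation1}. Continuity of these expressions in the established $C^{k-1,\alpha}(X_y)$-topology immediately yields $c(\rho_{\ve_j})\to c(\rho)$ and $\bar\partial v_{\rho_{\ve_j}}\to\bar\partial v_\rho$ on $X_y$, completing the proof.
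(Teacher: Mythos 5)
Your overall architecture coincides with the paper's: uniform $C^{k,\alpha}(X_y)$-bounds on $\vp_\ve$, $V\vp_\ve$ and $\bar VV\vp_\ve$ via Propositions \ref{P:approximation1}--\ref{P:approximation3}, extraction of a convergent subsequence, identification of the limits through the limiting elliptic equations together with a normalization of the fiber integrals, and finally the observation that $c(\rho_\ve)$ and $\bar\partial v_{\rho_\ve}$ are continuous algebraic expressions in these quantities. However, the one step you defer --- the uniform bound on $\int_{X_y}(V\vp_\ve)(\omega_y)^n$ and $\int_{X_y}(\bar VV\vp_\ve)(\omega_y)^n$ --- is precisely where the content of the proof lies, and the method you propose for it does not work. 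Differentiating $s\mapsto\int_{X_s}\vp_\ve\,\omega^n$ expresses $\int_{X_s}(V\vp_\ve)\omega^n$ as $\partial_s$ of a family of functions about which you only know a uniform $C^0$-bound, plus bounded terms; a uniform sup-bound on a family of smooth functions of $s$ gives no control whatsoever on their $s$-derivatives, so nothing is gained. Your sketch also uses these same fiber-integral identities a second time, to kill the additive constant in the identification of $\psi_1$ and $\psi_2$, so the gap propagates there as well.

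The paper closes this gap differently (Section \ref{S:app_geo_curv}): it takes $V=v_\rho$, the horizontal lift with respect to the $d$-closed form $\rho$, and differentiates the exact identity $\int_{X_s}(\rho_\ve)^n=\int_{X_s}e^{\ve\vp_\ve}\rho^n\equiv 1$ rather than the uncontrolled quantity $\int_{X_s}\vp_\ve\omega^n$. Proposition \ref{P:Lie_derivative}, which rests on Lemma \ref{L:contraction} and on this specific choice of lift, shows that the $s$-derivative of this constant equals $\ve\int_{X_s}(v_\rho\vp_\ve)e^{\ve\vp_\ve}\rho^n$ with no correction terms, whence $\int_{X_s}(v_\rho\vp_\ve)(\rho_\ve)^n=0$ exactly (Lemma \ref{L:initial1}); differentiating once more in $\bar s$ and using the uniform bounds on $\vp_\ve$ and $v_\rho\vp_\ve$ gives $\int_{X_s}(\overline{v_\rho}v_\rho\vp_\ve)(\rho_\ve)^n\rightarrow 0$. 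These identities simultaneously furnish the missing hypothesis of Proposition \ref{P:Key_Prop} and pin down the constants in the limit equations. Until you supply an argument of this kind --- differentiating an identity that is genuinely constant in $s$, with a lift adapted to $\rho$ --- the proof is incomplete at its essential point.
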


\section{Approximation of the geodesic curvature}\label{S:app_geo_curv}

In this section, we shall prove Proposition \ref{P:conv_geo_curv}. 

First we recall the setting:
Let $p:X\rightarrow\DD$ be a Calabi-Yau fibration and let $\omega$ be a fixed K\"ahler form on $X$. For each fiber $X_y$, we have a unique solution $\vp_{y,\ve}$ of the following complex Monge-Amp\`ere equation:
\begin{equation}\label{E:CMAEvpve}
\begin{aligned}
\paren{\omega_y+dd^c\vp_{y,\ve}}^n
&=
e^{\varepsilon\vp_{y,\ve}}e^{\eta_y}(\omega_y)^n
\;\;\text{and}
\\
\omega_y+dd^c\vp_{y,\ve}
&>0,
\end{aligned}
\end{equation}
where $\eta$ is defined in Section \ref{S:fiberwiseRFf}.
As we mentioned, we can consider $\vp_\ve$ as a smooth function on $X$ by letting
\begin{equation*}
\vp_\ve(x):=\vp_{y,\ve}(x),
\end{equation*}
where $y=p(x)$. 
Denote by $\rho_\ve=\omega+dd^c\vp_\ve$.

On the other hand, for each fiber $X_y$, we have the solution $\vp_y$ of the following complex Monge-Amp\`ere equation:
\begin{equation}\label{E:CMAEvp}
\begin{aligned}
\paren{\omega_y+dd^c\varphi_y}^n &= e^{\eta\vert_{X_y}}(\omega_y)^n, \\
\omega_y+dd^c&\varphi_y>0,
\end{aligned}
\end{equation}
which is normalized by 
\begin{equation}\label{E:normalization}
\int_{X_y}\varphi_y e^{\eta_y}(\omega_y)^n=0.
\end{equation}
Then $\vp$ is a smooth function on $X$. We denote by $\rho=\omega+dd^c\vp$. 
It is remarkable to note that $\rho_\ve$ and $\rho$ are uniformly equivalent on $X_y$ by Proposition \ref{P:approximation1}.

In this section, we write the horizontal lifting $v_\rho$ of $\partial/\partial s$ with respect to $\rho$ as follows:
\begin{equation*}
v_\rho=\pd{}{s}+a\ind{s}{\alpha}{}\pd{}{z^\gamma}
=
\pd{}{s}-h_{s\bar\beta}h^{\bar\beta\alpha}\pd{}{z^\gamma}.
\end{equation*}
in an admissible coordinate $(z,s)$ in $X$.

\begin{theorem}\label{T:convergence}
For a fixed fiber $X_y$, the following holds:
\begin{equation*}
\vp_\ve\rightarrow \vp,
\;\;\;
v_\rho\vp_\ve\rightarrow 
v_\rho\vp
\;\;\;
\text{and}
\;\;\;
\overline{v_\rho} v_\rho\vp_\ve
\rightarrow
\overline{v_\rho}v_\rho\vp
\end{equation*}
as $\ve\rightarrow0$ in $C^{k,\alpha}(X_y)$-topology for any $k\in\NN$ and $\alpha\in(0,1)$.
\end{theorem}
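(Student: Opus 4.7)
The plan is to apply Propositions \ref{P:approximation1}, \ref{P:approximation2}, and \ref{P:approximation3} in succession on the fiber $X_y$, producing uniform $C^{k,\alpha}(X_y)$ bounds for $\vp_\ve$, $v_\rho\vp_\ve$, and $\overline{v_\rho}v_\rho\vp_\ve$; Arzela--Ascoli then extracts convergent subsequences, and the limits are identified via uniqueness of the linearized PDEs on $X_y$.

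First I apply Proposition \ref{P:approximation1} to the fiber $X_y$ with $f_\ve\equiv\eta|_{X_y}$, which is constant in $\ve$ and hence trivially Cauchy. This yields $\|\vp_\ve\|_{C^{k,\alpha}(X_y)}\leq C$. Arzela--Ascoli extracts a subsequence converging in $C^{k,\alpha}(X_y)$ to some limit; passing to the limit in \eqref{E:CMAEvpve} and applying the same computation as in Corollary \ref{C:convergence_vp} fiberwise shows the limit satisfies the normalization \eqref{E:normalization}, so it coincides with $\vp|_{X_y}$. Since every subsequence has a further subsequence converging to $\vp|_{X_y}$, the full sequence converges.

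Next I apply Propositions \ref{P:approximation2} and \ref{P:approximation3} with the lift $V=v_\rho$. The regularity hypotheses $\|v_\rho\eta\|_{C^{k,\alpha}(X_y)}\leq C$ and $\|\overline{v_\rho}v_\rho\eta\|_{C^{k,\alpha}(X_y)}\leq C$ are automatic, since $v_\rho$ is a fixed smooth vector field on $X$ built from the Ricci-flat metric $\rho$ and $\eta\in C^\infty(X)$. The main obstacle is the integral bound
\[
\bigg|\int_{X_y}(v_\rho\vp_\ve)(\omega_y)^n\bigg|\leq C_1
\]
uniformly in $\ve$, and its second-order analogue. My approach is to differentiate the fiber-integrated CMA identity $\int_{X_s}e^{\ve\vp_\ve+\eta}(\omega_s)^n\equiv 1$ in $\partial/\partial s$ via property (iii) of fiber integration, then subtract the corresponding identity $\int_{X_s}e^\eta(\omega_s)^n\equiv 1$ satisfied by the limit $\vp$. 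Using $e^{\ve\vp_\ve}=1+\ve\vp_\ve+O(\ve^2)$ and the uniform $C^0$ bound of Step~1, the leading contribution cancels, and after dividing by $\ve$ one obtains $\int_{X_y}(v_\rho\vp_\ve)e^{\ve\vp_\ve+\eta}(\omega_y)^n=O(1)$. Since $\rho_\ve^n|_{X_y}=e^{\ve\vp_\ve+\eta}(\omega_y)^n$ and $\rho_\ve$ is uniformly equivalent to $\omega_y$ by Proposition \ref{P:approximation1}, this suffices to verify the hypotheses of the propositions (equivalently, by replacing the reference measure in Proposition \ref{P:Key_Prop} with $\rho_\ve^n|_{X_y}$, which is allowed by the uniform equivalence). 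A second differentiation in $\partial/\partial\bar s$ handles $\overline{v_\rho}v_\rho\vp_\ve$.

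Once the $C^{k,\alpha}(X_y)$ bounds are in place, Arzela--Ascoli with a diagonal extraction produces subsequential limits $v_\rho\vp_\ve\to\psi$ and $\overline{v_\rho}v_\rho\vp_\ve\to\chi$ in $C^{k,\alpha}(X_y)$. Passing $\ve\to 0$ in \eqref{E:pde_vpve'} shows $\psi$ satisfies the same elliptic PDE as $v_\rho\vp$ (obtained by $v_\rho$-differentiating the limiting CMA), so $\psi-v_\rho\vp$ is $\rho$-harmonic on $X_y$ and hence constant. This constant vanishes because both $\int_{X_y}\psi\,e^\eta(\omega_y)^n$ (the limit of the bounded integral above) and $\int_{X_y}(v_\rho\vp)\,e^\eta(\omega_y)^n$ (computed by differentiating the normalization $\int_{X_y}\vp\,e^\eta(\omega_y)^n=0$ in $v_\rho$) are both equal to $-\int_{X_y}\vp(v_\rho\eta+g^{\bar\beta\alpha}v_\rho g_{\alpha\bar\beta})e^\eta(\omega_y)^n$. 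An entirely analogous argument gives $\chi=\overline{v_\rho}v_\rho\vp$, completing the proof.
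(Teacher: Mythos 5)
Your proposal is correct and follows essentially the same route as the paper: derive the linearized elliptic equations for $v_\rho\vp_\ve$ and $\overline{v_\rho}v_\rho\vp_\ve$, verify the integral hypothesis of the approximation propositions by differentiating the fiber-integrated Monge--Amp\`ere identity in $s$ (the paper's Lemma \ref{L:initial1} and its second-order analogue), extract convergent subsequences, and identify the limits via the limiting PDE together with the normalization. The only blemish is the explicit common value $-\int_{X_y}\vp(v_\rho\eta+g^{\bar\beta\alpha}v_\rho g_{\alpha\bar\beta})e^\eta(\omega_y)^n$ claimed at the end, which omits the divergence contribution of the horizontal component of $v_\rho$ acting on the volume form; this is harmless because, as the paper shows via Proposition \ref{P:Lie_derivative}, both normalizing integrals are in fact equal to zero when taken against $\rho^n$.
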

It is obvious that this theorem implies Proposition \ref{P:conv_geo_curv}. 
\medskip

In the proof, we fix a fiber $X_y$ and omit the subscript $y$, if there is no confusion. It is easy to see that Corollary 3.5 yields the first assertion. 
This also implies that there exists a uniform constant $C>0$ such that
\begin{equation}\label{E:equivalence'}
\frac{1}{C}\omega_y
<
\rho_\ve\vert_{X_y}
<
C\omega_y,
\end{equation}
for $\ve>0$.
\medskip

Before going to the further proof of Theorem \ref{T:convergence}, we introduce the following proposition about the fiber integral.

\begin{proposition}\label{P:Lie_derivative}
Let $\tau$ be a real $(1,1)$-form on $X$ whose restriction on each fiber $X_s$ is positive definite.
For a smooth function $f$ on $X$, we have
\begin{equation*}
\pd{}{s}\int_{X_s}f\tau^n
=
\int_{X_s}L_{v_\tau}\paren{f\tau^n}
=
\int_{X_s}(v_\tau f)\tau^n.
\end{equation*}
In particular, if $\int_{X_s}f\tau^n=0$ for $s\in\DD$, then 
$$
\int_{X_s}(v_\tau f)\tau^n=0.
$$
\end{proposition}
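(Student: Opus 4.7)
The plan is to reduce the second equality to a local computation on each fiber using Cartan's magic formula, exploiting that $\tau$ is $d$-closed. The first equality will follow directly from property (iii) of fiber integration in Subsection~\ref{SS:AFCMAE}.

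First I would derive the first equality from the stated compatibility of fiber integration with the Lie derivative: since $v_\tau$ is a smooth lift of $\partial/\partial s$ on $X$ (by definition of the horizontal lift) and $f\tau^n$ is a smooth $(n,n)$-form on $X$, property (iii) in Subsection~\ref{SS:AFCMAE} applied with $V = v_\tau$ and $\xi = f\tau^n$ immediately gives $\pd{}{s}\int_{X_s}f\tau^n = \int_{X_s}L_{v_\tau}(f\tau^n)$. Next I would expand via the Leibniz rule,
\begin{equation*}
L_{v_\tau}(f\tau^n) = (v_\tau f)\,\tau^n + f\, L_{v_\tau}\tau^n,
\end{equation*}
and show that the second summand contributes nothing to the fiber integral. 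By Cartan's magic formula together with $d\tau = 0$ (hence $d\tau^n = 0$), we have $L_{v_\tau}\tau^n = d(i_{v_\tau}\tau^n)$, and the derivation property of interior multiplication plus Lemma~\ref{L:contraction} yield
\begin{equation*}
i_{v_\tau}\tau^n = (i_{v_\tau}\tau)\wedge\tau^{n-1} = \im\, c(\tau)\, d\bar s\wedge\tau^{n-1}.
\end{equation*}

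Since $\iota_s^*(d\bar s) = 0$ for the inclusion $\iota_s\colon X_s \hookrightarrow X$, we obtain $\iota_s^*(i_{v_\tau}\tau^n) = 0$; because pullback commutes with the exterior derivative, also $\iota_s^*(L_{v_\tau}\tau^n) = 0$. Multiplying by $f$ and integrating over $X_s$ then gives $\int_{X_s}f L_{v_\tau}\tau^n = 0$, which, combined with the previous display, establishes the second claimed identity. The final assertion is immediate: if $\int_{X_s}f\tau^n$ vanishes identically in $s$, its $\pd{}{s}$-derivative is zero, so $\int_{X_s}(v_\tau f)\tau^n = 0$. I do not anticipate a serious obstacle here; the whole argument hinges on Lemma~\ref{L:contraction}, which encodes the geometric fact that the horizontal lift of $\partial/\partial s$ contracts with $\tau$ to a pure $d\bar s$ multiple—a direct consequence of the defining orthogonality condition.
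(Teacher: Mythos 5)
Your proof is correct and follows essentially the same route as the paper: both use property (iii) of fiber integration for the first equality and Cartan's magic formula together with Lemma \ref{L:contraction} for the second. The only (harmless) difference is organizational --- you apply the Leibniz rule first and observe that $\iota_s^*\paren{L_{v_\tau}\tau^n}=0$ pointwise on the fiber, whereas the paper applies Cartan's formula directly to $f\tau^n$ and disposes of the exact term by Stokes' theorem.
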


\begin{proof}
The first equality is mentioned in Section 3.2. Cartan's magic formula and Stokes' theorem imply that
\begin{align*}
\pd{}{s}\int_{X_s}f\tau^n
&=
\int_{X_s}L_{v_\tau}\paren{f\tau^n}
=
\int_{X_s} \paren{d\circ i_{v_\tau}+i_{v_\tau}\circ d}
\paren{f\tau^n}\\
&=
\int_{X_s} d\paren{i_{v_\tau}\paren{f\tau^n}}
+
\int_{X_s} i_{v_\tau}\paren{df\wedge\tau^n}\\
&=
\int_{X_s} (v_\tau f)\tau^n
-
\int_{X_s} df\wedge i_{v_\tau}(\tau^n).
\end{align*}
On the other hand, Lemma \ref{L:contraction} implies that
\begin{equation*}
i_{v_\tau}(\tau^n)
=
i_{v_\tau}(\tau)\wedge\tau^{n-1}
=
\im c(\tau)\wedge\tau^{n-1}\wedge d\bar s.
\end{equation*}
Hence we have
\begin{equation*}
\int_{X_s} df\wedge i_{v_\tau}(\tau^n)
=
\int_{X_s} \im c(\tau)df\wedge\tau^{n-1}\wedge d\bar s=0.
\end{equation*}
This completes the proof.
\end{proof}

Now we go back to the proof of the second assertion. Taking logarithm of \eqref{E:CMAEvpve} and differentiating it with respect to $v_\rho$, we have
\begin{equation*}
(h_\ve)^{\bar\beta\alpha}
v_\rho\paren{g_{\alpha\bar\beta}+(\vp_\ve)_{\alpha\bar\beta}}
=
\ve v_\rho\vp_\ve
+
v_\rho\eta
+
g^{\bar\beta\alpha}v_\rho(g_{\alpha\bar\beta}).
\end{equation*}
As in Section 3, we have
\begin{equation*}
-\Delta_{\rho_\ve}\paren{v_\rho\vp_\ve}
+\ve\paren{v_\rho\vp_\ve}
=
-v_\rho\eta
+
(h_\ve)^{\alpha\bar\beta}
\paren{
v_\rho\paren{g_{\alpha\bar\beta}}
+
[v_\rho,\vp_\ve]_{\alpha\bar\beta}
}
-
g^{\alpha\bar\beta}
v_\rho\paren{g_{\alpha\bar\beta}},
\end{equation*}
where $\Delta_{\rho_\ve}$ is the Laplace-Beltrami operator of $\rho_\ve$ and
\begin{align*}
[v_\rho,\vp_\ve]_{\alpha\bar\beta}
&=
v_\rho((\vp_\ve)_{\alpha\bar\beta})-(v_\rho(\vp_\ve))_{\alpha\bar\beta} \\
&=
-a\ind{s}{\gamma}{\alpha\bar\beta}(\vp_\ve)_\gamma
-a\ind{s}{\gamma}{\alpha}(\vp_\ve)_{\gamma\bar\beta}
-a\ind{s}{\gamma}{\bar\beta}(\vp_\ve)_{\alpha\gamma}.
\end{align*}
We denote the right hand side by $R_\ve$. Hence $v_\rho\vp_\ve$ satisfies the following equation:
\begin{equation} \label{E:pde1}
-\Delta_{\rho_\ve}(v_\rho\vp_\ve)
+
\varepsilon(v_\rho\vp_\ve)
=
R_\ve.
\end{equation}
Then Proposition \ref{P:Key_Prop} implies that there exists a uniform constant $C>0$ such that
\begin{equation*}
\norm{v_\rho\vp_\ve}_{C^{k,\alpha}(X_s)}<C.
\end{equation*}

By the same computation to \eqref{E:CMAEvp}, $v_\rho\vp$ satisfies that 
\begin{equation}\label{E:pde1'}
-\Delta_\rho(v_\rho\vp)
=
R,
\end{equation}
where 
$$
R=
-v_\rho\eta
+
h^{\alpha\bar\beta}
\paren{
v_\rho\paren{g_{\alpha\bar\beta}}
+
[v_\rho,\vp]_{\alpha\bar\beta}
}
-
g^{\alpha\bar\beta}
v_\rho\paren{g_{\alpha\bar\beta}}.
$$
Since $\vp_\ve$ converges to $\vp$ and
$[v_\rho,\vp_\ve]_{\alpha\bar\beta}$ does not include $s$-derivative of $\vp_\ve$, we have
\begin{equation*}
(h_\ve)^{\bar\beta\alpha}\rightarrow
h^{\bar\beta\alpha}
\;\;\;\text{and}\;\;\;
[v_\rho,\vp_\ve]_{\alpha\bar\beta}
\rightarrow
[v_\rho,\vp]_{\alpha\bar\beta}
\;\;\;\text{as}\;\;\;
\ve\rightarrow0.
\end{equation*}
It follows that Equation \eqref{E:pde1} converges to Equation \eqref{E:pde1'} as $\ve\rightarrow0$. 
Since Proposition \ref{P:Lie_derivative} says that $v_\rho\vp$ is the unique solution of \eqref{E:pde1'} which satisfies that
\begin{equation*}
\int_{X_s}(v_\rho\vp)\rho^n=0,
\end{equation*}
the following Lemma completes the proof.

\begin{lemma}\label{L:initial1}
The following holds:
\begin{equation*}
\lim_{\ve\rightarrow0}
\int_{X_s}(v_\rho\vp_\ve)\rho^n=0.
\end{equation*}
\end{lemma}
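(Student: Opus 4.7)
The plan is to exploit an \emph{exact} zero-mean identity for $v_\rho\vp_\ve$ against the approximating Monge-Amp\`ere measure $\rho_\ve^n$, combined with the fact that $\rho_\ve^n$ and $\rho^n$ differ only by a factor $e^{\ve\vp_\ve} = 1 + O(\ve)$. Since $\rho_\ve^n = e^{\ve\vp_\ve+\eta}\omega^n = e^{\ve\vp_\ve}\rho^n$ and the fiber integral $\int_{X_s}\rho_\ve^n = \int_{X_s}\omega^n$ is independent of $s$, applying Proposition~\ref{P:Lie_derivative} with $f = e^{\ve\vp_\ve}$ and $\tau = \rho$ yields
\begin{equation*}
0 = \pd{}{s}\int_{X_s}e^{\ve\vp_\ve}\rho^n = \ve\int_{X_s}e^{\ve\vp_\ve}(v_\rho\vp_\ve)\rho^n,
\end{equation*}
i.e., $\int_{X_s}(v_\rho\vp_\ve)\rho_\ve^n = 0$ exactly, and rewriting $\rho_\ve^n = e^{\ve\vp_\ve}\rho^n$ gives
\begin{equation*}
\int_{X_s}(v_\rho\vp_\ve)\rho^n = -\int_{X_s}(e^{\ve\vp_\ve}-1)(v_\rho\vp_\ve)\rho^n.
\end{equation*}

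Since $\norm{\vp_\ve}_{L^\infty(X_s)}\le C$ uniformly by Proposition~\ref{P:approximation1}, we have $\abs{e^{\ve\vp_\ve}-1}\le C\ve$, so it suffices to bound $\norm{v_\rho\vp_\ve}_{L^1(\rho)}$ independently of $\ve$. For this I use the PDE~\eqref{E:pde1}, $-\Delta_{\rho_\ve}(v_\rho\vp_\ve) + \ve(v_\rho\vp_\ve) = R_\ve$, observing that $R_\ve$ is uniformly bounded in $L^\infty(X_s)$ since its defining expression only involves $\vp_\ve$, its $z$-derivatives up to order two, and the coefficients of $\rho_\ve$, all controlled uniformly by Proposition~\ref{P:approximation1} and the uniform equivalence~\eqref{E:equivalence'}. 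Multiplying the PDE by $v_\rho\vp_\ve$ and integrating against $\rho_\ve^n$ produces the standard energy identity; Cauchy-Schwarz on the right-hand side, combined with the Poincar\'e inequality on $(X_s,\rho_\ve)$ applied to the zero-$\rho_\ve$-mean function $v_\rho\vp_\ve$, yields $\norm{v_\rho\vp_\ve}_{L^2(\rho_\ve)}\le C$. By H\"older and uniform equivalence of $\rho_\ve$ with $\rho$, this upgrades to $\norm{v_\rho\vp_\ve}_{L^1(\rho)}\le C$.

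Putting the two halves together,
\begin{equation*}
\abs{\int_{X_s}(v_\rho\vp_\ve)\rho^n}\le \norm{e^{\ve\vp_\ve}-1}_{L^\infty}\norm{v_\rho\vp_\ve}_{L^1(\rho)}\le C\ve,
\end{equation*}
which tends to $0$ as $\ve\to 0$. The main technical point is the uniformity of the Poincar\'e constant on $(X_s,\rho_\ve)$, which follows from the uniform equivalence of $\rho_\ve$ with the fixed K\"ahler metric $\omega_y$ (the $L^2$ norm and gradient norm with respect to either metric are comparable up to uniform constants). The heart of the argument, however, is the exact identity $\int_{X_s}(v_\rho\vp_\ve)\rho_\ve^n = 0$: without it, Poincar\'e would leave behind a mean term which is precisely the quantity we are trying to control, and the argument would be circular.
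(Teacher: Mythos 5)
Your proof is correct and follows essentially the same route as the paper: both hinge on differentiating the constant fiber volume $\int_{X_s}e^{\ve\vp_\ve+\eta}\omega^n=1$ (equivalently, applying Proposition \ref{P:Lie_derivative} to $f=e^{\ve\vp_\ve}$, $\tau=\rho$) to get the exact identity $\int_{X_s}(v_\rho\vp_\ve)\rho_\ve^n=0$, and then exploit $\rho_\ve^n=e^{\ve\vp_\ve}\rho^n=(1+O(\ve))\rho^n$. The paper closes the final step by citing the uniform equivalence of $\rho_\ve$ and $\rho$ together with the already-established uniform bounds on $v_\rho\vp_\ve$, whereas you rederive an $L^2$ bound via the energy/Poincar\'e argument of Proposition \ref{P:Key_Prop}; this is only a more self-contained rendering of the same estimate.
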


\begin{proof}
Integrating \eqref{E:CMAEvpve}, we have
\begin{equation}\label{E:totalmassve}
1=\int_{X_s}e^{\ve\vp_\ve+\eta}\omega^n.
\end{equation}
Differentiating with respect to $s$, we have
\begin{equation*}
0
=
\pd{}{s}\int_{X_s}e^{\ve\vp_\ve+\eta}\omega^n
=
\int_{X_s}v_\rho(e^{\ve\vp_\ve})\rho^n
=
\ve\int_{X_s}(v_\rho\vp_\ve)e^{\ve\vp_\ve}\rho^n.
\end{equation*}
Since $e^{\ve\vp_\ve}(\rho_\ve)^n=\rho^n$ on each fiber $X_s$,
\begin{equation*}
\int_{X_s}(v_\rho\vp_\ve)(\rho_\ve)^n=0.
\end{equation*}
Since $\rho_\ve$ and $\rho$ is uniformly equivalent on $X_s$, this completes the proof.
\end{proof}

It remains only to prove the last assertion. 
\medskip

Differentiating \eqref{E:pde1} with respect to $\overline{v_\rho}$, we have
\begin{equation}\label{E:PDEss1}
\begin{aligned}
-\Delta_{\rho_\ve}(\overline{v_\rho}v_\rho\vp_\ve)
+
\varepsilon(\overline{v_\rho}v_\rho\vp_\ve)
=&
\overline{v_\rho}\paren{(h^\ve)^{\bar\beta\alpha}}\cdot(v_\rho(\vp_\ve))_{\alpha\bar\beta}
+
\overline{v_\rho}(R_\ve)\\
&+
(h_\ve)^{\bar\beta\alpha}[\overline{v_\rho},v_\rho\vp_\ve]_{\alpha\bar\beta}.
\end{aligned}
\end{equation}
Then Proposition \ref{P:Key_Prop} implies that there exists a uniform constant $C>0$ such that
\begin{equation*}
\norm{\overline{v_\rho}v_\rho\vp_\ve}_{C^{k,\alpha}(X_s)}<C.
\end{equation*}
By the same way, $\overline{v_\rho}v_\rho\vp$ satisfies that
\begin{equation}\label{E:PDEss0}
-\Delta_\rho\overline{v_\rho}v_\rho\vp
=
\overline{v_\rho}\paren{h^{\bar\beta\alpha}}\cdot(v_\rho\vp)_{\alpha\bar\beta}
+
\overline{v_\rho}R
+
h^{\bar\beta\alpha}[\overline{v_\rho},v_\rho\vp]_{\alpha\bar\beta}.
\end{equation}
We already know that $\vp_\ve\rightarrow\vp$ and $v_\rho\vp_\ve\rightarrow v_\rho\vp$ as $\ve\rightarrow0$ on $X_y$. 
Hence the similar argument says that the RHS of \eqref{E:PDEss1} converges to the RHS of \eqref{E:PDEss0} as $\ve\rightarrow0$.
Since Proposition \ref{P:Lie_derivative} says that $\overline{v_\rho}v_\rho\vp$ is the unique solution of \eqref{E:PDEss0} which satisfies that
\begin{equation*}
\int_{X_s}(\overline{v_\rho}v_\rho\vp)\rho^n=0,
\end{equation*}
As the previous argument, the following lemma completes the proof.

\begin{lemma}
The following holds:
\begin{equation*}
\lim_{\ve\rightarrow0}
\int_{X_s}(\overline{v_\rho}v_\rho\vp_\ve)\rho^n=0.
\end{equation*}
\end{lemma}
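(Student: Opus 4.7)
The plan is to proceed in direct analogy with Lemma~\ref{L:initial1}, trading a derivative in $s$ for one in $\bar s$. The starting point will be the identity
\begin{equation*}
\int_{X_s}(v_\rho\vp_\ve)\,e^{\ve\vp_\ve}\,\rho^n = 0,
\end{equation*}
which is already established inside the proof of Lemma~\ref{L:initial1} (it is obtained by dividing the chain $0=\ve\int_{X_s}(v_\rho\vp_\ve)e^{\ve\vp_\ve}\rho^n$ by $\ve$). Since this equality is valid for every $s\in\DD$, differentiating it with respect to $\bar s$ should bring $\overline{v_\rho}v_\rho\vp_\ve$ down into the integrand, which is exactly the quantity we wish to control.

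For this I need a conjugate analog of Proposition~\ref{P:Lie_derivative}: for a $d$-closed real $(1,1)$-form $\tau$ positive on fibers and any smooth function $f$ on $X$,
\begin{equation*}
\pd{}{\bar s}\int_{X_s}f\tau^n = \int_{X_s}(\overline{v_\tau}f)\,\tau^n.
\end{equation*}
This should follow by mimicking the proof of Proposition~\ref{P:Lie_derivative}: $\overline{v_\tau}$ is a smooth lift of $\partial/\partial\bar s$, so Cartan's formula applies, and conjugating Lemma~\ref{L:contraction} gives $i_{\overline{v_\tau}}\tau = -\im c(\tau)\,ds$, whence $\int_{X_s}df\wedge i_{\overline{v_\tau}}(\tau^n)=0$ by the same degree/factor argument. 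Applying this with $\tau=\rho$ and $f=(v_\rho\vp_\ve)e^{\ve\vp_\ve}$, the Leibniz rule yields
\begin{equation*}
\int_{X_s}(\overline{v_\rho}v_\rho\vp_\ve)\,e^{\ve\vp_\ve}\,\rho^n
=
-\ve\int_{X_s}(v_\rho\vp_\ve)(\overline{v_\rho}\vp_\ve)\,e^{\ve\vp_\ve}\,\rho^n.
\end{equation*}

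The last step is to estimate both sides uniformly in $\ve$. Proposition~\ref{P:approximation1} bounds $\vp_\ve$ uniformly, so $e^{\ve\vp_\ve}=1+O(\ve)$; Proposition~\ref{P:approximation2} bounds $v_\rho\vp_\ve$ and its conjugate $\overline{v_\rho}\vp_\ve$ uniformly; and Proposition~\ref{P:approximation3} bounds $\overline{v_\rho}v_\rho\vp_\ve$ uniformly. Hence the right-hand side above is $O(\ve)$, and replacing $e^{\ve\vp_\ve}$ by $1$ on the left costs only an additional $O(\ve)$, giving
\begin{equation*}
\int_{X_s}(\overline{v_\rho}v_\rho\vp_\ve)\,\rho^n = O(\ve)\longrightarrow 0
\quad\text{as } \ve\to 0,
\end{equation*}
which is the desired conclusion. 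The only step requiring genuine thought is the verification of the conjugate version of Proposition~\ref{P:Lie_derivative}; once that is in place, everything else is a mechanical combination of the Leibniz rule with the uniform $C^{k,\alpha}$ estimates already proved in Section~\ref{S:approximation}.
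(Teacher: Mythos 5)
Your proof is correct and follows essentially the same route as the paper: both differentiate the fiberwise volume normalization once in $s$ and once in $\bar s$ (you by applying the conjugate of Proposition \ref{P:Lie_derivative} to the first-order identity already recorded in Lemma \ref{L:initial1}, the paper by taking $\partial^2/\partial\bar s\partial s$ of $1=\int_{X_s}e^{\ve\vp_\ve+\eta}\omega^n$), and both land on the identical relation $\int_{X_s}(\overline{v_\rho}v_\rho\vp_\ve)e^{\ve\vp_\ve}\rho^n=-\ve\int_{X_s}\abs{v_\rho\vp_\ve}^2e^{\ve\vp_\ve}\rho^n$, which is $O(\ve)$ by the uniform bounds on $\vp_\ve$ and $v_\rho\vp_\ve$. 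The one point to watch is that your final appeal to Proposition \ref{P:approximation3} (to trade $e^{\ve\vp_\ve}\rho^n=(\rho_\ve)^n$ for $\rho^n$) is only legitimate because the identity just displayed already supplies the integral bound that Proposition \ref{P:approximation3} takes as a hypothesis; this is the same ordering the paper uses implicitly when it says the proof is completed ``as in Lemma \ref{L:initial1}.''
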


\begin{proof}
Differentiating \eqref{E:totalmassve} with respect to $s$ and $\bar s$, we have
\begin{align*}
0
&=
\pd{^2}{\bar s\partial s}\int_{X_s}e^{\ve\vp_\ve+\eta}\omega^n
=
\int_{X_s}(\overline{v_\rho}v_\rho e^{\ve\vp_\ve})\rho^n\\
&=
\ve\int_{X_s}(\overline{v_\rho}v_\rho\vp_\ve)e^{\ve\vp_\ve}\rho^n
+
\ve^2\int_{X_s}\abs{v_\rho\vp_\ve}^2e^{\ve\vp_\ve}\rho^n.
\end{align*}
Since $\vp_\ve$ and $v_\rho\vp_\ve$ are uniformly bounded, it follows that
\begin{equation*}
\int_{X_s}(\overline{v_\rho}v_\rho\vp_\ve)(\rho_\ve)^n
\rightarrow0
\;\;\;
\text{as}
\;\;\;
\ve\rightarrow0.
\end{equation*}
This completes the proof as in the proof of Lemma \ref{L:initial1}.
\end{proof}

\noindent{\bf Acknowlegement.} 
The second author happily acknowledges his thanks to Mihai P\v aun who suggested this problem, shared his ideas. He is also indebted to Hoang Lu Chinh, Bo Berndtsson, Henri Guenancia, Long Li for many helpful comments and discussions. Finally, he would like to thank Y. Ge for informing the error of the previous version to the author.

The second author was supported by the National Research Foundation(NRF) of Korea grant funded by the Korea government (No. 2018R1C1B3005963).

\end{document}